\numberwithin{equation}{section}
\theoremstyle{plain}
\newtheorem{theorem}{Theorem}[section]
\newtheorem{lemma}[theorem]{Lemma}
\newtheorem{corollary}[theorem]{Corollary}
\theoremstyle{definition}
\newtheorem{definition}[theorem]{Definition}
\theoremstyle{remark}
\newtheorem{remark}[theorem]{Remark}
\newtheorem{notation}{Notation}[section]
\def\bR{\mathbb{R}}
\def\cA{\mathcal{A}}
\def\cD{\mathcal{D}}
\def\sD{\mathsf{D}}
\def\cL{\mathcal{L}}
\def\cF{\mathcal{F}}
\def\cE{\mathcal{E}}
\def\cX{\mathcal{X}}
\def\cY{\mathcal{Y}}
\def\cZ{\mathcal{Z}}
\newcommand{\Div}{\operatorname{div}}
\begin{document}
\title[Gradient estimates]{Gradient estimates for Stokes and Navier-Stokes systems with piecewise DMO coefficients}

\author[J. Choi]{Jongkeun Choi}
\address[J. Choi]{Department of Mathematics Education, Pusan National University, Busan 46241, Republic of Korea}
\email{jongkeun\_choi@pusan.ac.kr}
\thanks{
J. Choi was supported by  National Research Foundation of Korea(NRF) under agreement NRF-2019R1F1A1058826}

\author[H. Dong]{Hongjie Dong}
\address[H. Dong]{Division of Applied Mathematics, Brown University, 182 George Street, Providence, RI 02912, USA}
\email{Hongjie\_Dong@brown.edu }
\thanks{H. Dong was partially supported by the Simons Foundation, grant no. 709545}

\author[L. Xu]{Longjuan Xu}
\address[L. Xu]{Department of Mathematics, National University of Singapore, 10 Lower Kent Ridge Road, Singapore 119076}
\email{ljxu311@163.com}
\thanks{}

\subjclass[2010]{76D07, 35B65, 35J47}
\keywords{Stokes system, piecewise Dini mean oscillation, Gradient estimate}

\begin{abstract}
We study stationary Stokes systems in divergence form with piecewise Dini mean oscillation coefficients and data in a bounded domain containing a finite number of subdomains with $C^{1,\rm{Dini}}$ boundaries.
We prove that if $(u, p)$ is a weak solution of the system, then $(Du, p)$ is bounded and piecewise continuous.
The corresponding results for stationary Navier-Stokes systems are also established, from which the Lipschitz regularity of the stationary $H^1$-weak solution in dimensions $d=2,3,4$ is obtained.
\end{abstract}

\maketitle

\section{Introduction}

In this paper, we consider stationary Stokes systems with variable coefficients
\begin{align}\label{stokes}
\begin{cases}
\mathcal{L}u+\nabla p=D_{\alpha}f_{\alpha}&\quad\mbox{in }~\cD,\\
\Div u=g&\quad\mbox{in }~\cD.
\end{cases}
\end{align}
The differential operator $\cL$ is in divergence form acting on column vector valued functions $u=(u^1,\ldots, u^d)^{\top}$ as follows:
\begin{equation}\label{210322@eq1}
\cL u=D_\alpha (A^{\alpha\beta}D_\beta u),
\end{equation}
where we use  the Einstein summation convention over repeated indices.
The domain $\cD$ is bounded in $\bR^d$ which consists of a finite number of disjoint subdomains and  the coefficients $A^{\alpha\beta}=A^{\alpha\beta}(x)$ can have jump discontinuities along the boundaries of the subdomains.
As is well known, such a system is partly motivated by the study of composite materials with closely spaced interfacial boundaries.
We refer the reader to \cite{MN1987,MR3078336} for Stokes flow over composite spheres. Moreover, it can be used to model  the motion of inhomogeneous fluids with density dependent viscosity and  multiple fluids with interfacial boundaries; see  \cite{MR0425391, MR1422251, MR2663713, MR3758532} and the references therein. Another direction is the study of stress concentration in high-contrast composites with densely packed inclusions whose material properties differ from that of the background.
In \cite{AKKY20}, Ammari et al. investigated the stress concentration of  Stokes flow between adjacent circular cylinders.

 There is a large body of literature concerning regularity theory for partial differential equations arising from the problems of composite materials.
For the theory of elliptic equations/systems in divergence form, we refer the reader to Chipot et al. \cite{MR0853976}, Li-Vogelius \cite{MR1770682}, Li-Nirenberg \cite{MR1990481}, Dong-Li \cite{MR3902466},  and the references therein.
In particular, $W^{1, \infty}$ and piecewise $C^{1,\delta'}$-estimates were obtained by Li-Nirenberg \cite{MR1990481} for elliptic systems with piecewise $C^{\delta}$ coefficients in a domain which consists of a finite number of disjoint subdomains with $C^{1,\mu}$ boundaries, $0<\mu\leq1$ and $0<\delta'\leq\min\{\delta,\frac{\mu}{2(1+\mu)}\}$.
The results in \cite{MR1990481} were extended by the second and third named authors \cite{MR3961984} to the system with piecewise Dini mean oscillation (DMO) coefficients and subdomains having $C^{1, \rm{Dini}}$ boundaries.
They also established piecewise $C^{1,\delta'}$-estimate for solutions under the same conditions and with $0<\delta'\leq\min\{\delta,\frac{\mu}{1+\mu}\}$.
See also \cite{dx2021} for the corresponding results for parabolic systems.
For further related results, one can refer to \cite{MR2927619,MR3403998} for parabolic and elliptic systems with partially Dini or  H\"older continuous coefficients and \cite{MR3912724} for Stokes systems with partially Dini mean oscillation coefficients.

Inspired by the work mentioned above, we are interested in gradient estimates for Stokes systems with piecewise DMO coefficients. The goal of this paper consists of two aspects. We first extend the results in \cite{MR3961984} for elliptic systems to  the stationary Stokes systems \eqref{stokes}. Precisely, we show in Theorem \ref{M1} that
if the coefficients and data are of piecewise Dini mean oscillation and the boundaries of subdomains are $C^{1, \rm{Dini}}$, then
for every weak solution $(u,p)\in W^{1,q}(\cD)^d\times L^q(\cD)$ to \eqref{stokes}, $q\in(1,\infty)$, $Du$ and $p$ are locally bounded and piecewise continuous.
As an application, we obtain piecewise H\"older continuity for $Du$ and $p$ under H\"older regularity assumptions on the coefficients and the boundaries of the subdomains.
We remark that the corresponding estimates are independent of the distance between subdomains so that the boundaries of more than two subdomains can touch at some points.
We also prove a local $W^{1,q}$-estimate for $W^{1,1}$-weak solutions in Corollary \ref{C1} by exploiting the argument in
 \cite{MR2465684,MR2548032}  combined with Theorem  \ref{M1}.

Second, we consider the stationary Navier-Stokes systems
 $$
 \begin{cases}
 \mathcal{L}u+\nabla p+u^\alpha D_\alpha u=D_{\alpha}f_{\alpha}&\quad\mbox{in }~\cD,\\
 \Div u=g&\quad\mbox{in }~\cD.
 \end{cases}
 $$
 We obtain any $W^{1,q}$-solution is Lipschitz and piecewise $C^1$, where $q\in[ d/2,\infty)$;  see Theorem \ref{M2} for the details. This result can be applied to $H^1$-weak solution to stationary Navier-Stokes systems with piecewise Dini mean oscillation coefficients in dimensions $d=2,3,4$.  Related work can be found in \cite{MR0520820}, in which the author considered the Laplace operator and proved the smoothness of every weak solution for $d=4$ provided the data are good enough.

Let us briefly describe our arguments based on Campanato's approach. Such approach was used in \cite{MR0717034,MR1184023} and further developed in \cite{MR2927619,MR3620893,MR3912724,MR3961984}. The key point is to show the mean oscillations of $Du$ and $p$ in balls vanish in a certain order as the radii of balls go to zero. Recalling the nature of the domain and the coefficients, $Du$ and $p$ are discontinuous in one direction, say, $x^d$, which is the main challenge in this paper. We overcome this difficulty by choosing a coordinate system according to the geometry of the subdomains and then using the weak type-$(1,1)$ estimates obtained in \cite[Lemma 3.4]{MR3912724} to control the $L^{1/2}$-mean oscillations of $D_{x'}u$ and the linear combinations $A^{d\beta}D_\beta u+pe_d-f_d$; see Lemma \ref{210127@lem1} for the details.  We point out that the proof in our case  is more involved than that in \cite{MR3912724} since our arguments and estimates depend on the coordinate system, and also more involved than that in \cite{MR3961984} because of  the pressure term and the divergence equation in the Stokes systems \eqref{stokes}. For example, in the proof of local boundedness of $Du$ and $p$ (see Step 4 in the proof of Theorem \ref{M1}), an additional difficulty appears from the pressure term on the right-hand side after the localization. For this, we adapt a delicate approximation argument and the fixed point theorem.

The rest of the paper is organized as follows. In Section \ref{Assumptions}, we fix our notation, introduce function spaces and assumptions on the domain, coefficients, and data, and then state our main results, Theorem \ref{M1} for stationary Stokes systems and Theorem \ref{M2} for stationary Navier-Stokes systems.
In Section \ref{S_3}, we provide the proofs of the main theorems.

\section{Assumptions and main results}\label{Assumptions}
We first fix some notation used throughout the paper.
We use $x=(x',x^d)$ to denote a generic point in the Euclidean space $\bR^d$, where $d\ge 2$ and $x'=(x^1,\ldots, x^{d-1})\in \bR^{d-1}$.
%it should be understood that $x'=(x^1,\ldots, x^{d-1})\in \bR^{d-1}$ and $x^d\in \bR$.
We also write $y=(y', y^d)$ and $x_0=(x_0',x_0^d)$, etc.
For $r>0$, we denote
$$B_{r}(x)=\{y\in\mathbb R^{d}: |y-x|<r\},\quad B'_{r}(x')=\{y'\in\mathbb R^{d-1}: |y'-x'|<r\}.$$
We often write $B_r$ and $B'_r$ instead of $B_r(0)$ and $B'_r(0)$, respectively.
%We use $B_{r} =B_{r}(0)$ for abbreviation, where $0\in\mathbb R^{d}$.
For $k\in \{1,\dots,d\}$, we use $e_{k}$ to denote the $k$-th unit vector in $\mathbb R^{d}$.

Let $\Omega$ be a domain in $\bR^d$.
For $q\in (0, \infty]$, we define
$$
\tilde{L}^q(\Omega)=\{f\in L^q(\Omega): (f)_{\Omega}=0\},
$$
where $(f)_{\Omega}$ is the average of $f$ over $\Omega$, i.e.,
$$
(f)_\Omega=\fint_{\Omega} f\ dx=\frac{1}{|\Omega|}\int_{\Omega} f \ dx.
$$
For $q\in [1,\infty]$, we denote by $W^{1,q}(\Omega)$ the usual Sobolev space and by $W^{1,q}_0(\Omega)$ the completion of $C^\infty_0(\Omega)$ in $W^{1,q}(\Omega)$, where $C^\infty_0(\Omega)$ is the set of all infinitely differentiable functions with a compact support in $\Omega$.
We say that a function $\omega:[0,1]\to [0, \infty)$ is  a Dini function if it  is monotonically increasing and satisfies
$$
\int_0^{1}\frac{\omega(t)}{t}\,dt<+\infty.
$$
We also say that a function $f$ defined on $\Omega$ is  Dini continuous if the function $\varrho_f:[0, 1]\to [0, \infty)$ given by
$$
\varrho_f(t)=\sup_{\substack{x,y\in \Omega\\|x-y|\le t}} |f(x)-f(y)|
$$
is a Dini function.

\subsection{Assumptions on the domain}		\label{Sec_2_1}
Before we state our assumptions on the domain, we recall the definition of a domain having a $C^{1,\rm{Dini}}$ boundary.

\begin{definition}\label{def Dini}
Let $\Omega$ be a domain in $\bR^d$.
We say that $\Omega$ has a $C^{1,\text{Dini}}$ boundary if there exist a constant $R_0\in (0, 1]$ and a concave Dini function $\varrho_0$ such that the following holds.
For any $x_0=(x_0',x_0^d)\in \partial \Omega$, there exist a $C^1$ function  $\chi:\bR^{d-1}\to \bR$ and a coordinate system depending on $x_0$ such that
$$
\varrho_{\nabla_{x'}\chi}(t)\le \varrho_0(t) \quad \text{for all }\, t\in [0, R_0]
$$
and that in the new coordinate system, we have
$$
|\nabla_{x'}\chi(x_0')|=0
$$
and
\begin{equation}		\label{210218@B1}
\Omega\cap B_{R_0}(x_0)=\{x\in B_{R_0}(x_0): x^d>\chi(x')\}.
\end{equation}
\end{definition}

In this paper, we always assume that $\cD$ is a bounded domain in $\bR^d$ containing $M$ subdomains
$\sD_1,\ldots, \sD_M$ such that
\begin{enumerate}[i.]
\item
$\sD_M=\cD\setminus \big(\cup_{i=1}^{M-1}\overline{\sD_i}\big)$,
\item
for $i,j\in \{1,\ldots, M-1\}$ with $i\neq j$, we have either
\begin{equation}		\label{210219@eq3}
\overline{\sD_i}\subset \sD_j \quad \text{or}\quad \overline{\sD_i}\cap \overline{\sD_j}=\emptyset,
\end{equation}
\item
for $i\in \{1,\ldots M-1\}$, $\sD_i$ has a $C^{1,\rm{Dini}}$ boundary as in Definition \ref{def Dini} with the same constant $R_0$ and Dini function $\varrho_0$.
\end{enumerate}

Our assumptions on the domain, which look a bit different from those in \cite{MR3961984} are in fact identical.
Precisely, by disjointing the subdomains $\sD_1, \ldots, \sD_{M-1}$, one can understand $\cD$
 as a domain containing $M$ {\em{disjoint}} subdomains ${\cD}_1, \ldots, {\cD}_M$ such that
\begin{enumerate}[i${}^\prime$.]
\item
$\cD_M=\sD_M$.
\item
any point in $\cD$ belongs to the boundaries of at most two of the subdomains.
\item
for $i\in \{1,\ldots, M-1\}$, $\cD_i$ has a $C^{1, \rm{Dini}}$ boundary in an appropriate sense.
%\todo{What does ``in an appropriate sense'' mean?}
\end{enumerate}

Among the above two expressions of the nature of the domain,
the second  is useful to describe the regularity conditions on the coefficients and data, which  may have jump discontinuities across the interfacial boundaries; see Section \ref{Sec_2_2}.
On the other hand, the first expression is convenient to explain the regularity of the  boundaries by using Definition \ref{def Dini}.
%\todo{Can we draw a picture of the ``narrow" regions and give a short explanation on why \eqref{210218@B1} is not guaranteed? So that the two expressions can be clearer. }
Because the disjointed subdomains $\cD_i$ in the second expression may have ``narrow" regions, \eqref{210218@B1} is not guaranteed with the same constant $R_0$ independent of the distance between subdomains. For example,
if $M=3$, $\cD_1:=B_{1/2-\varepsilon}$, $\cD_2:=B_{1/2}\setminus \overline{B_{1/2-\varepsilon}}$, and $\cD_3:=B_1\setminus \overline{B_{1/2}}$, then when we explain the regularity of $\partial \cD_2$ via Definition \ref{def Dini}, we need to take $R_0$ to
 be less than $\varepsilon$ which is the distance between $\cD_{1}$ and $\cD_{3}$.
That is why we added ``appropriate sense" in the condition iii${}^\prime$.
In the following, we will use the notation $\cD_{i}$ introduced above to denote the subdomains.

We end this subsection with a remark that the condition \eqref{210219@eq3} can be relaxed to
$$
\sD_i\subset \sD_j \quad \text{or}\quad \sD_i\cap\sD_j=\emptyset,
$$
so that the boundaries of more than two subdomains touch at some points; see Remark \ref{210219@rmk2}.

\subsection{Assumptions on the coefficients and data}		\label{Sec_2_2}
We assume that the coefficients $A^{\alpha\beta}$ of the operator $\cL$ in \eqref{210322@eq1} are bounded and satisfy the strong ellipticity condition, that is, there exists $\nu\in (0,1)$ such that
\begin{equation}		\label{210210@eq4}
|A^{\alpha\beta}(x)|\le \nu^{-1}, \quad \sum_{\alpha,\beta=1}^d A^{\alpha\beta}(x)\xi_\beta\cdot \xi_\alpha\ge \nu \sum_{\alpha=1}^d |\xi_\alpha|^2
\end{equation}
for any $x\in \bR^d$ and $\xi_\alpha\in \bR^d$, $\alpha\in \{1,\ldots, d\}$.
We also assume that  the coefficients and data are of piecewise Dini mean oscillation satisfying Definition \ref{D1} below in the domain $\cD$ containing $M$ disjoint subdomains $\cD_1,\ldots, \cD_M$ as in Section \ref{Sec_2_1}.

\begin{definition}\label{D1}
Let $f\in L^1(\cD)$.
We say that $f$ is of piecewise Dini mean oscillation in $\cD$ if there exists a Dini function $\omega_f$ such that for any $x_0\in \cD$ and $r\in (0,1]$ satisfying $B_r(x_0)\subset \cD$, we have
\begin{equation}		\label{210129@eq1}
\fint_{B_r(x_0)}\big|f(x)-\hat{f}(x)\big|\,dx\le \omega_f(r),
\end{equation}
where $\hat{f}=\hat{f}_{x_0,r}$ is a piecewise continuous function on $B_r(x_0)$ given by
$$
\hat{f}(x)=\fint_{B_r(x_0)\cap {\cD}_i}f(y)\,dy \quad \text{if }\, x\in B_r(x_0)\cap {\cD}_i.
$$
\end{definition}

Our definition of a function of piecewise Dini mean oscillation is equivalent to the definition in \cite{MR3961984}, where the piecewise mean oscillation is measured by taking the infimum over the set of all piecewise constant functions.

\subsection{Main results}		\label{Sec_2_3}

The main results of this paper are as follows.

\begin{theorem}	\label{M1}
Let $\cD$ be a bounded domain in $\bR^d$ containing $M$ disjoint subdomains ${\cD}_1,\ldots, {\cD}_M$ with $C^{1, \rm{Dini}}$ boundaries as in Section \ref{Sec_2_1}.
Also, let $q\in (1,\infty)$  and
$(u, p)\in W^{1,q}(\cD)^d\times L^q(\cD)$ be a weak solution of
\begin{equation}\label{210201@eq4}
\begin{cases}
\mathcal{L}u+\nabla p=D_{\alpha}f_{\alpha}&\quad\mbox{in }~\cD,\\
\Div u=g&\quad\mbox{in }~\cD,
\end{cases}
\end{equation}
where $f_\alpha\in L^\infty(\cD)^d$ and $g\in L^\infty(\cD)$.
If $A^{\alpha\beta}$, $f_\alpha$, and $g$ are of piecewise Dini mean oscillation in $\cD$ satisfying Definition \ref{D1}, then for any $\cD'\Subset\cD$, we have
$$
(u, p)\in W^{1,\infty}(\cD')^d\times L^\infty(\cD')
$$
and
$$
(u, p)\in C^1\big(\overline{\cD_i}\cap \cD'\big)^d\times C\big(\overline{\cD_i}\cap \cD'\big), \quad i\in \{1,\ldots, M\}.
$$
If we further assume that
there exist $\gamma_0\in (0,1)$ and $K>0$ such that
\begin{equation}		\label{210215@eq3}
\varrho_0(r)\le K r^{\frac{\gamma_0}{1-\gamma_0}}, \quad \omega_{A^{\alpha\beta}}(r)+\omega_{f_\alpha}(r)+\omega_g(r)\le Kr^{\gamma_0}
\end{equation}
for all $r\in (0, R_0]$,
  then
$$
(u, p)\in C^{1,\gamma_0}\big(\overline{\cD_i}\cap \cD'\big)^d\times C^{\gamma_0}\big(\overline{\cD_i}\cap \cD'\big), \quad i\in \{1,\ldots, M\}.
$$
\end{theorem}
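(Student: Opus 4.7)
The plan is to follow the Campanato-type scheme outlined in the introduction, adapting the argument of \cite{MR3961984} for elliptic systems to the Stokes setting. The overall strategy is: (i) show that the mean oscillation of $(Du,p)$ on balls decays at a Dini rate as the radius shrinks, (ii) conclude piecewise continuity of $(Du,p)$ from summing the decay (a Dini-type Campanato characterization), and (iii) iterate once more with the geometric rate $r^{\gamma_0}$ to obtain piecewise $C^{1,\gamma_0}\times C^{\gamma_0}$ when \eqref{210215@eq3} holds.

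First, I would dispose of the case $B_r(x_0)\subset \cD_i$ for some $i$: here the coefficients and data are of (non-piecewise) Dini mean oscillation on $B_r(x_0)$, so the interior Stokes estimates with DMO coefficients from \cite{MR3912724} yield $Du,p$ locally bounded and continuous. The substantial case is when $B_r(x_0)$ crosses an interface, and since any point lies on the boundary of at most two subdomains, locally we may reduce to a two-phase configuration near $x_0\in \partial\cD_i$. Flatten this interface by choosing a $C^{1,\mathrm{Dini}}$ change of coordinates aligned with $\nabla_{x'}\chi$, so that the tangential directions $x'$ and the normal direction $x^d$ are distinguished. The key point is that for a weak solution, the \emph{tangential} derivatives $D_{x'}u$ and the \emph{conormal-like quantity} $A^{d\beta}D_\beta u + p\,e_d - f_d$ are continuous across the flattened interface (the former is preserved because $u$ is continuous across interfaces, and the latter because it is the natural flux coming from testing against $D_d$-directed test functions in the weak formulation together with the pressure gradient). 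These are precisely the quantities for which the weak-type $(1,1)$ estimate of \cite[Lemma 3.4]{MR3912724} applies, and the standing lemma to prove (the advertised Lemma \ref{210127@lem1}) is that the $L^{1/2}$-mean oscillation of $(D_{x'}u, A^{d\beta}D_\beta u+pe_d-f_d)$ on $B_{\kappa r}(x_0)$ is controlled by a constant times $\kappa^{\gamma}$ times the same quantity on $B_r(x_0)$ plus $\omega(r)$-type terms coming from $\omega_{A^{\alpha\beta}}$, $\omega_{f_\alpha}$, $\omega_g$, and $\varrho_0$. Together with the strong ellipticity \eqref{210210@eq4}, which lets us solve algebraically for $D_d u$ in terms of $D_{x'}u$ and the conormal combination, this controls the full gradient and pressure.

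Next, from the decay lemma one applies the standard Campanato/Dini summation machinery: iterating on dyadic balls produces a Dini modulus of continuity for the piecewise-extended functions $\widehat{Du}$ and $\widehat{p}$, giving $(u,p)\in C^1(\overline{\cD_i}\cap\cD')^d\times C(\overline{\cD_i}\cap\cD')$. Under \eqref{210215@eq3}, the Dini moduli are geometric, and the iteration yields the sharper $C^{1,\gamma_0}\times C^{\gamma_0}$ conclusion. Local $L^\infty$ bounds for $Du$ and $p$ are obtained from the same iteration by controlling the initial data $\fint_{B_r}(|Du|^{1/2}+|p|^{1/2})$ via a cutoff and an inhomogeneous Stokes estimate; this is the step where the pressure creates trouble.

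The main obstacle is precisely this last point: because the pressure is not controlled by a gradient inequality, after localizing by a cutoff $\eta$, the equation for $\eta u$ picks up a forcing term involving $p$ itself, so one cannot simply close the estimate by absorbing. The remedy I would implement, following the sketch in the introduction, is a fixed-point/approximation argument: approximate the coefficients and data by smooth piecewise-constant objects (for which classical Stokes estimates yield the required $L^\infty$ bounds on $(Du,p)$), solve an auxiliary Stokes problem whose pressure absorbs the troublesome term, and show that the resulting map on a suitable Banach space of pressures is a contraction when the radius is small, so that its fixed point recovers $p$ with the desired bound. Passing to the limit in the approximations, together with the already-established Dini mean oscillation decay, yields the claimed $W^{1,\infty}\times L^\infty$ bound and the piecewise $C^1\times C$ regularity, and under \eqref{210215@eq3} the piecewise Hölder conclusion. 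I expect the fixed-point step and the bookkeeping of coordinate-dependent constants in the two-phase oscillation lemma to be the two most delicate pieces.
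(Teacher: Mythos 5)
Your proposal follows essentially the same route as the paper's proof: the same Campanato scheme applied to the $L^{1/2}$-mean oscillation of $(D_{x'}u,\, A^{d\beta}D_\beta u+pe_d-f_d)$ in a coordinate system adapted to the point, the same use of the weak type-$(1,1)$ estimate of \cite[Lemma 3.4]{MR3912724} for the decay lemma, the same algebraic recovery of $D_du$ and $p$ from the strong ellipticity, and the same mollification/flattening/fixed-point device to establish the qualitative local boundedness of $(Du,p)$ before the a priori estimates can be invoked. The one substantive deviation is in the decay lemma: you reduce to a two-phase configuration and flatten the interface there, whereas the paper keeps all $\ell+1$ layers meeting the ball, only rotates coordinates so that $\nabla_{x'}\chi_{i_0}(x_0')=0$, and compares the true subdomains $\widehat{\cD}_i$ with flat strips $\Omega_i$ through the measure estimate \eqref{210128@eq1}; this is precisely what makes the paper's constants independent of the distance between subdomains (Remark \ref{210219@rmk2}). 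Your version still yields the stated qualitative theorem, since each point of $\cD'$ has a neighborhood meeting at most two subdomains and $\cD'$ admits a finite such cover, but the resulting bounds degenerate as subdomains approach one another, and flattening already in the decay step reintroduces the $D_d(\pi b)$-type terms that the paper confines to Step 4.
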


Related to the theorem above, we have a few remarks.

\begin{remark}		\label{210219@rmk2}
Upper bounds of the $L^\infty$-norms  and the modulus of continuity of $Du$ and $p$ can be found in the proof of the theorem; see Section \ref{S_3_1}.
Note that these upper bounds are independent of the distance between the subdomains.
Thus our results can be applied to the case when the boundaries of more than two  subdomains  touch at some points.

In the middle of the proof, we also proved that for any $x_0\in \cD'$, there exists a coordinate system associated with $x_0$ such that the certain linear combinations
$$
D_{x'} u \quad \text{and}\quad A^{d\beta}D_\beta u+p e_d-f_d
$$
are continuous at $x_0$.
Moreover, if \eqref{210215@eq3} holds, then they are H\"older continuous with the same exponent $\gamma_0$.
\end{remark}

\begin{remark}		\label{210215@rmk1}
The condition \eqref{210215@eq3} holds provided that the subdomains $\cD_i$ have $C^{1, \gamma_0/(1-\gamma_0)}$ boundaries and that $A^{\alpha\beta}$, $f_\alpha$, and $g$ are in $C^{\gamma_0}(\overline{\cD}_i)$ for each $i\in \{1,\ldots, M\}$.
\end{remark}

\begin{remark}		\label{210210@rmk1}
By the same reasoning as in \cite[Remark 2.4]{MR3912724},
one can extend the results in Theorem \ref{M1} to weak solutions of the system
$$
\begin{cases}
\mathcal{L}u+\nabla p=D_{\alpha}f_{\alpha}+f&\quad\mbox{in }~\cD,\\
\Div u=g&\quad\mbox{in }~\cD,
\end{cases}
$$
where $f\in L^{s}(\cD)^d$ with $s>d$.
The corresponding upper bounds of the $L^\infty$-norms and the modulus of continuity of $Du$ and $p$ can be found in Remark \ref{210219@rmk1} at the end of Section \ref{S_3_1}.
\end{remark}

In the corollary below, we present the $W^{1,q}$-estimate for $W^{1,1}$-weak solutions, which follows from Theorem \ref{M1}, the solvability results in \cite{MR3758532},  and the argument in Brezis \cite{MR2465684} (see also \cite[Appendix]{MR2548032}). One may refer to the proof of \cite[Theorem 2.5]{MR3912724}, where the authors proved the $W^{1,q}$-estimate for $W^{1,1}$-weak solutions to the Stokes system with partially Dini mean oscillation coefficients.

\begin{corollary}\label{C1}
Let $\cD$ be a bounded domain in $\bR^d$ containing $M$ disjoint subdomains ${\cD}_1,\ldots, {\cD}_M$ as in Section \ref{Sec_2_1}.
Also, let $(u, p)\in W^{1,1}(\cD)^d\times L^1(\cD)$ be a weak solution of \eqref{210201@eq4}, where $f_\alpha\in L^{q}(\cD)^d$ and $g\in L^{q}(\cD)$ with $q\in (1, \infty)$.
If $A^{\alpha,\beta}$, $f_\alpha$, and $g$ are piecewise Dini mean oscillation in $\cD$ satisfying Definition \ref{D1}, then  for $\cD'\Subset \cD$,  we have
$$
(u, p)\in W^{1,q}(\cD')^d\times L^{q}(\cD')
$$
with the estimate
$$
\|u\|_{W^{1,q}(\cD')}+\|p\|_{L^q(\cD')}\le N \big(\|u\|_{W^{1,1}(\cD)}+\|p\|_{L^1(\cD)}+\|f_\alpha\|_{L^q(\cD)}+\|g\|_{L^q(\cD)}\big),
$$
where the constant $N$ depends only on $d$, $\nu$, $M$, $R_0$, $\varrho_0$, $\omega_{A^{\alpha\beta}}$, $q$, and $\operatorname{dist}(\partial \cD,  \cD')$.
\end{corollary}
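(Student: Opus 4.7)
The plan is to follow the Brezis-type bootstrap carried out for Stokes systems in \cite[Theorem 2.5]{MR3912724}, substituting Theorem \ref{M1} above for its partially Dini-mean-oscillation analogue. The argument rests on three ingredients: a cutoff reduction to a bounded smooth domain, the $W^{1,s}$-solvability of the Dirichlet Stokes problem with DMO coefficients from \cite{MR3758532} for every $s \in (1, \infty)$, and Theorem \ref{M1} applied to an adjoint problem.

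First I would fix nested sets $\cD' \Subset \cD'' \Subset \cD$ with $\partial \cD''$ smooth and disjoint from the subdomain interfaces $\cup_{i<M}\partial\cD_i$, together with a cutoff $\eta \in C_c^\infty(\cD'')$ identically $1$ on $\cD'$. A direct computation using \eqref{210201@eq4} shows that $(v,\pi) := (\eta u,\eta p)$ satisfies on $\cD''$
\begin{align*}
\cL v + \nabla \pi &= D_\alpha\bigl(\eta f_\alpha + A^{\alpha\beta} u\, D_\beta \eta\bigr) + H, \\
\Div v &= \eta g + u\cdot\nabla\eta,
\end{align*}
with $v|_{\partial\cD''}=0$, where $H := -f_\alpha D_\alpha\eta + (D_\alpha\eta)A^{\alpha\beta}D_\beta u + p\,\nabla\eta$ absorbs the lower-order terms generated by the cutoff.

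The heart of the proof is a uniqueness principle: every $(w,\sigma)\in W^{1,1}_0(\cD'')^d\times L^1(\cD'')$ solving the homogeneous Stokes system on $\cD''$ vanishes identically. I would prove this by duality. Given $\chi\in C_c^\infty(\cD'')^d$, let $(\Phi,\Psi)$ solve the adjoint Stokes problem $\cL^*\Phi+\nabla\Psi=\chi$, $\Div\Phi=0$ in $\cD''$, with $\Phi|_{\partial\cD''}=0$. The transposed coefficients are again piecewise DMO, so Theorem \ref{M1} delivers $\Phi\in W^{1,\infty}_{\mathrm{loc}}(\cD'')^d$ and $\Psi\in L^\infty_{\mathrm{loc}}(\cD'')$ in the interior; since $\partial\cD''$ was chosen disjoint from the interfaces, the coefficients are globally DMO in a neighborhood of the boundary and classical boundary estimates for Stokes upgrade this to $\Phi\in W^{1,\infty}$, $\Psi\in L^\infty$ up to $\partial\cD''$. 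Testing the weak form of the homogeneous equations for $(w,\sigma)$ against $(\Phi,\Psi)$ and, symmetrically, the weak form of the adjoint against $(w,\sigma)$, and then using $\Div w=\Div\Phi=0$ to annihilate the pressure pairings, yields $\int_{\cD''} w\cdot\chi=0$ for every admissible $\chi$, whence $w\equiv 0$; afterwards $\nabla\sigma=0$ forces $\sigma$ to be constant, and the normalization pins it to zero.

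The corollary then follows by iterating on the integrability exponent. Starting from $u\in W^{1,1}$ and $p\in L^1$, the term $H$ lies in $L^1\hookrightarrow W^{-1,s_1}$ for some $s_1\in (1,d/(d-1))$, so the $W^{1,s_1}$-solvability from \cite{MR3758532} yields $(\tilde v,\tilde\pi)\in W^{1,s_1}_0(\cD'')^d\times\tilde L^{s_1}(\cD'')$ solving the localized system; applying the uniqueness principle to $(v-\tilde v,\pi-\tilde\pi)\in W^{1,1}_0\times L^1$ identifies $(v,\pi)$ with $(\tilde v,\tilde\pi)$. Sobolev embedding then promotes $H$ to $L^{s_1}\hookrightarrow W^{-1,s_2}$ with $s_2>s_1$, and repeating reaches $s_n\ge q$ in finitely many steps; the quantitative bound is read off from the chain of $W^{1,s_n}$-solvability estimates, with the listed dependence of $N$. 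The main obstacle is the duality step, which requires $D\Phi\in L^\infty$ and $\Psi\in L^\infty$ simultaneously on all of $\cD''$ in order to pair against $(w,\sigma)\in W^{1,1}\times L^1$ whose support can reach $\partial\cD''$; arranging $\partial\cD''$ to miss the interfaces is precisely what lets us harvest both bounds at once from Theorem \ref{M1} combined with classical boundary Stokes estimates.
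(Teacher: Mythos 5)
Your proposal is correct and follows exactly the route the paper indicates for Corollary \ref{C1}: localization, the Brezis-type uniqueness-by-duality argument (with Theorem \ref{M1} supplying the Lipschitz bound for the adjoint solution and the boundary of the auxiliary domain chosen away from the interfaces), the $W^{1,s}$-solvability from \cite{MR3758532}, and a Sobolev bootstrap on the exponent, which is precisely the combination of \cite{MR2465684,MR2548032} and \cite[Theorem 2.5]{MR3912724} that the paper cites in lieu of a written-out proof.
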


\begin{remark}
From Corollary \ref{C1}, the results in Theorem \ref{M1} still hold under the assumption that $(u, p)\in W^{1,1}(\cD)^d\times L^1(\cD)$.
\end{remark}

We also consider stationary Navier-Stokes systems with piecewise Dini mean oscillation coefficients.

\begin{theorem}	\label{M2}
Let $\cD$ be a bounded domain in $\bR^d$ containing $M$ disjoint subdomains ${\cD}_1,\ldots, {\cD}_M$ with $C^{1, \rm{Dini}}$ boundaries as in Section \ref{Sec_2_1}.
Also, let $q\in (1, \infty)$ with $q\ge d/2$ and
 $(u, p)\in W^{1,q}(\cD)^d\times L^q(\cD)$ be a weak solution of
$$
\begin{cases}
\mathcal{L}u+\nabla p+u^\alpha D_\alpha u=D_{\alpha}f_{\alpha}&\quad\mbox{in }~\cD,\\
\Div u=g&\quad\mbox{in }~\cD,
\end{cases}
$$
where $f_\alpha\in L^\infty(\cD)^d$ and $g\in L^\infty(\cD)$.
If $A^{\alpha\beta}$, $f_\alpha$, and $g$ are of piecewise Dini mean oscillation in $\cD$ satisfying Definition \ref{D1}, then for any $\cD'\Subset\cD$, we have
$$
(u, p)\in W^{1,\infty}(\cD')^d\times L^\infty(\cD')
$$
and
$$
(u, p)\in C^1\big(\overline{\cD_i}\cap \cD'\big)^d\times C\big(\overline{\cD_i}\cap \cD'\big), \quad i\in \{1,\ldots, M\}.
$$
If we further assume \eqref{210215@eq3}, then
$$
(u, p)\in C^{1,\gamma_0}\big(\overline{\cD_i}\cap \cD'\big)^d\times C^{\gamma_0}\big(\overline{\cD_i}\cap \cD'\big), \quad i\in \{1,\ldots, M\}.
$$
\end{theorem}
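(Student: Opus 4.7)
The plan is to recast the stationary Navier-Stokes system as a Stokes system of the type treated in Theorem \ref{M1}, after a bootstrap of integrability upgrades the initial $W^{1,q}$ regularity to something strong enough to place the convective term in the admissible data class.

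Using $\Div u = g$, the convective term decomposes as
\[
u^\alpha D_\alpha u^k = D_\alpha(u^\alpha u^k) - (\Div u)\, u^k = D_\alpha(u^\alpha u^k) - g u^k,
\]
so the momentum equation reads
\[
\cL u + \nabla p = D_\alpha \tilde f_\alpha + h, \qquad \tilde f_\alpha := f_\alpha - u^\alpha u,\ \ h := g u,
\]
which is a Stokes system with a source term of the kind allowed in Remark \ref{210210@rmk1}. I would then bootstrap integrability. Given $u\in W^{1,q}$ with $q\ge d/2$, Sobolev embedding yields $u\in L^{q^\#}_{\mathrm{loc}}$ with $q^\# = dq/(d-q)$ for $q<d$, arbitrary finite $q^\#$ for $q=d$, and $q^\# = \infty$ for $q>d$; hence $\tilde f_\alpha \in L^{q^\#/2}_{\mathrm{loc}}$ and $h\in L^{q^\#}_{\mathrm{loc}}$. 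Reinterpreting the source $h$ as a distributional divergence via $L^s\hookrightarrow W^{-1,s^*}$, the local $W^{1,r}$-estimate for the Stokes system (Corollary \ref{C1} together with its source-term extension in the spirit of Remark \ref{210210@rmk1}) upgrades $u$ to $W^{1,q_1}_{\mathrm{loc}}$ with $q_1 > q$ whenever $q > d/2$. Iterating finitely many times gives $u\in W^{1,s}_{\mathrm{loc}}$ for some $s>d$, and Morrey's embedding makes $u$ Hölder continuous, in particular bounded, on each $\cD'\Subset\cD$.

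Once $u$ is bounded and continuous on a neighborhood $\cD_0$ of $\cD'$, $\tilde f_\alpha\in L^\infty(\cD_0)$ and inherits the piecewise Dini mean oscillation property from $f_\alpha$: the added continuous contribution $u^\alpha u$ has a modulus of continuity, which controls the mean oscillation \eqref{210129@eq1} against the piecewise averages, so $\omega_{\tilde f_\alpha}$ remains a Dini function. Moreover $h = gu \in L^\infty(\cD_0)\subset L^s(\cD_0)$ for every $s>d$. Applying Theorem \ref{M1} together with Remark \ref{210210@rmk1} on $\cD_0$ then yields $(u,p)\in W^{1,\infty}(\cD')^d\times L^\infty(\cD')$ and the piecewise $C^1$ and $C$ regularity. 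Under the additional hypothesis \eqref{210215@eq3}, the conclusions just obtained make $u$ piecewise $C^1$ on $\cD_0$, so $u^\alpha u$ is piecewise $C^1$ and in particular satisfies the $r^{\gamma_0}$ DMO bound in \eqref{210215@eq3}; a second application of Theorem \ref{M1} upgrades the regularity to piecewise $C^{1,\gamma_0}$ for $u$ and $C^{\gamma_0}$ for $p$.

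The main obstacle is the bootstrap at the critical exponent $q = d/2$ (relevant to the $H^1$ setting when $d=4$), where the one-step Sobolev-then-elliptic-regularity gain vanishes ($q_1 = d/2 = q$). I would handle this by localization: since $u\in L^d_{\mathrm{loc}}$, the quantity $\|u\|_{L^d(B_r)}$ tends to $0$ as $r\to 0$ by absolute continuity of the integral, so on sufficiently small balls the convective term $u\otimes u$ is small in $L^{d/2}$ and can be absorbed into the principal part of the Stokes operator; combined with a sharper Sobolev embedding into Lorentz spaces one extracts the additional integrability needed to escape criticality, in the spirit of \cite{MR0520820}.
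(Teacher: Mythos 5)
Your proposal follows essentially the same route as the paper: treat the convective term as data for the Stokes system, bootstrap integrability via a cutoff argument with smallness of $\|u\|_{L^d}$ on small balls, and then invoke Theorem~\ref{M1} together with Remark~\ref{210210@rmk1} once $u$ is bounded and (piecewise) continuous. The paper instead keeps $f=-u^\alpha D_\alpha u$ as a zeroth-order source (rather than splitting it as $D_\alpha(u\otimes u)-gu$), localizes with a cutoff $\zeta$, and uses the $W^{1,q^*}$-solvability of \cite[Theorem 2.4]{MR3758532} for an auxiliary operator $\tilde{\cL}$ on $B_1$; the critical term is estimated by H\"older as $\|u\|_{L^d(B_R)}\|Du\|_{L^{q^*}(B_R)}$ and absorbed, yielding the a~priori estimate \eqref{210212@A1}. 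With this estimate the gain at $q=d/2$ is already strict: one step takes $Du$ from $L^{d/2}_{\mathrm{loc}}$ to $L^{q^*}_{\mathrm{loc}}=L^d_{\mathrm{loc}}$, and a second application with the new exponent in $(d/2,d]$ produces some $s>d$. So, contrary to what you say, the bootstrap does not genuinely stall at $q=d/2$ once the smallness/absorption device you also propose is in place; the excursion into Lorentz-space embeddings is unnecessary. Your split $D_\alpha(u\otimes u)-gu$ also works, but then the absorption at the critical exponent requires a Gagliardo--Nirenberg interpolation ($\|u\|_{L^{2d}}\lesssim\|Du\|_{L^d}^{1/2}\|u\|_{L^d}^{1/2}$) to expose a factor of $\|Du\|_{L^{q^*}}$; the paper's direct H\"older estimate on $u^\alpha D_\alpha u$ is slightly cleaner. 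The remaining steps --- checking that $u\otimes u$ is of (piecewise) Dini mean oscillation once $u$ is H\"older, and a second pass through Theorem~\ref{M1} under \eqref{210215@eq3} --- match the paper's Case~1 and its reference to Remark~\ref{210219@rmk1}.
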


As a corollary of Theorem \ref{M2}, when $d=2,3,4$, any $H^1$-weak solution to the stationary Navier-Stokes system with piecewise Dini mean oscillation coefficients is Lipschitz.

\section{Proofs of main theorems}\label{S_3}
Throughout this paper, we use the following notation.

\begin{notation}
For nonnegative (variable) quantities $A$ and $B$, we denote $A \lesssim B$ if there exists a generic positive constant $C$ such that $A \le CB$.
We add subscript letters like $A\lesssim_{a,b} B$ to indicate the dependence of the implicit constant $C$ on the parameters $a$ and $b$.
\end{notation}

\subsection{Proof of Theorem \ref{M1}}		\label{S_3_1}
%Recall that $\varrho_0$ is the Dini function from Definition \ref{def Dini}.
We begin the proof with the following observation.
Under the assumptions on the domain $\cD$ with a scaling whose parameter depends only on $d$, $R_0$, $\varrho_0$, and $\operatorname{dist}(\partial \cD, \cD')$,
we may suppose that for any $x_0\in \cD'$,  there exist $C^{1,\rm{Dini}}$ functions $\chi_i:\bR^{d-1}\to \bR$, $i\in \{1,\ldots, \ell\}$ for some $\ell<M$, and a coordinate system such that the following properties hold in the new coordinate system (called the coordinate system associated with $x_0$):
\begin{enumerate}[$({\bf{A}}1)$]
\item
We have that
$$
\varrho_{\nabla_{x'}\chi_i}(r)\le \varrho_0(r)
$$
for all $r\in [0, R_0]$ and $i\in \{1,\ldots, \ell\}$, and that
$$
\chi_0(x')< \chi_1(x')< \cdots < \chi_{\ell}(x')<\chi_{l+1}(x')
$$
for all  $x'\in B_1'(x_0)$, where we have adopted the notation $\chi_0\equiv x_0^d-1$ and $\chi_{l+1}\equiv x_0^d+1$.
\item
$B_1(x_0)\subset \cD$ and $B_1(x_0)$ is divided into $\ell+1$ disjoint subdomains
$$
\widehat{\cD}_i:=\{x\in B_1(x_0):\chi_{i-1}(x')<x^d<\chi_i(x')\}, \quad i\in \{1, \ldots, \ell+1\}.
$$
Here, in an appropriate sense one may think of $\widehat{\cD}_i$ as ${\cD}_i\cap B_1(x_0)$.
Moreover,
$$
x_0\in \widehat{\cD}_{i_0} \cup \partial \widehat{\cD}_{i_0}\quad \text{for some }\, i_0\in \{1, \ldots, \ell+1\},
$$
the closest point on $\partial \widehat{\cD}_{i_0}$ to $x_0$ is $(x_0', \chi_{i_0}(x_0'))$, and $\nabla_{x'} \chi_{i_0}(x_0')=0'$.
\end{enumerate}

Throughout this proof, we shall use the following notation and properties in the coordinate system associated with $x_0$ satisfying $({\bf{A}}1)$ and $({\bf{A}}2)$.

\begin{enumerate}[$({\bf{B}}1)$]
\item
For $i\in \{1,\ldots, \ell+1\}$, we denote
$$
\Omega_i=\{x\in B_{1}(x_0): \chi_{i-1}(x_0')<x^d<\chi_i(x'_0)\}.
$$
By \cite[Lemma 2.3]{MR3961984}, there exists $R_1=R_1(R_0, \varrho_0)\in (0, R_0]$ such that for any $r\in (0, R_1]$,
\begin{equation}		\label{210128@eq1}
r^{-d}|(\widehat{\cD}_i \setminus \Omega_i)\cap B_{r}(x_{0})|\lesssim_{d, M, \varrho_0} \varrho_1(r),
\end{equation}
where $\varrho_1$ is a Dini function derived from $\varrho_0$.
\item
Let $f$ be of piecewise Dini mean oscillation in $\cD$ satisfying Definition \ref{D1} with a Dini function $\omega_f$.
For $r\in (0, R_1]$, we define piecewise continuous functions $\hat{f}=\hat{f}_{x_0, r}$ and $\bar{f}=\bar{f}_{x_0, r}$ in $B_r(x_0)$ by
$$
\hat{f}(x)=\fint_{\widehat{\cD}_i\cap B_r(x_0)} f(y)\ dy \quad \text{if }\, x\in B_r(x_0)\cap \widehat{\cD}_i
$$
and
\begin{equation}\label{210129@eq2a}
\bar{f}(x)=\fint_{\widehat{\cD}_i\cap B_r(x_0)} f(y)\ dy \quad \text{if }\, x\in B_r(x_0)\cap \Omega_i,
\end{equation}
where $\bar{f}$ is indeed a function of $x^d$.
Since
$\hat{f}\equiv \bar{f}$ in $B_r(x_0)\cap \widehat{\cD}_i\cap \Omega_i$,
 by \eqref{210128@eq1}, we have
$$
\begin{aligned}
\fint_{B_r(x_0)}|\hat{f}-\bar{f}|\ dx
&=\frac{1}{|B_r|}\sum_{i=1}^{\ell+1}\int_{(\widehat{\cD}_i \setminus \Omega_i)\cap B_r(x_0)}|\hat{f}-\bar{f}|\,dx\\
&\lesssim \|f\|_{L^\infty(B_r(x_0))}\varrho_1(r).
\end{aligned}
$$
From this together with \eqref{210129@eq1}, it follows that
\begin{equation}		\label{210129@eq2}
\fint_{B_r(x_0)} |f-\bar{f}|\,dx\lesssim_{d, M, \varrho_0} \omega_f(r)+\|f\|_{L^\infty(B_r(x_0))}\varrho_1(r).
\end{equation}
\item
We set
$$
U=A^{d\beta}D_\beta u+pe_d-f_d.
$$
For $y\in \cD$ and $r>0$ with $B_r(y)\subset B_1(x_0)$,
we define
$$
\Phi_{x_0}(y,r)=\inf_{\Theta\in\mathbb R^{d\times d}}\bigg(\fint_{B_{r}(y)}|(D_{x'}u, U)-\Theta|^{\frac{1}{2}}\ dx\bigg)^{2},
$$
where we used the subindex $x_0$ to indicate that the function is defined in the coordinate system associated with $x_0$.
\end{enumerate}

To prove Theorem \ref{M1}, we will use the following decay estimates.

\begin{lemma}\label{210127@lem1}
Let $x_0\in \cD'$, $r\in (0, R_1]$,  and $\gamma\in (0,1)$.
Then under the same hypotheses of Theorem \ref{M1} with an additional assumption that $Du$ and $p$ are locally bounded, there exists $N=N(d, \nu, M, \varrho_0, \gamma)>0$ such that the following assertions hold.
\begin{enumerate}[$(i)$]
\item
For any $\rho\in (0, r]$, we have
\begin{equation}		\label{210225@C1}
\begin{aligned}
\Phi_{x_0}(x_0,\rho)
&\leq N\Big(\frac{\rho}{r}\Big)^{\gamma}\Phi_{x_0}(x_0, r)+N\|Du\|_{L^{\infty}(B_{r}(x_{0}))}\big(\tilde{\omega}_{A^{\alpha\beta}}(\rho)+\tilde{\varrho}_1(\rho)\big)\\
&\quad +N\big(\|f_\alpha\|_{L^\infty(B_{r}(x_0))}+\|g\|_{L^\infty(B_{r}(x_0))}\big)\tilde{\varrho}_1(\rho)\\
&\quad +N\big(\tilde{\omega}_{f_{\alpha}}(\rho)+\tilde{\omega}_{g}(\rho)\big)
\end{aligned}
\end{equation}
\item
For any $y\in B_{r/2}(x_0)$ and $\rho\in (0, r/2]$ such that  $B_\rho(y)\subset \widehat{\cD}_{i_1}$ for some $i_1\in \{1, \ldots, \ell+1\}$, we have
\begin{equation}		\label{210225@C2}
\begin{aligned}
\Phi_{x_0}(y, \rho)
&\le N \bigg(\frac{\rho}{r}\bigg)^\gamma \Phi_{y}(y, r/2)+N\|Du\|_{L^\infty(B_{r/2}(y))}\big(\tilde{\omega}_{A^{\alpha\beta}}(\rho)+\tilde{\varrho}_1(\rho)\big)\\
&\quad+N\big(\|f_\alpha\|_{L^\infty(B_{r/2}(y))}+\|g\|_{L^\infty(B_{r/2}(y))}\big)\big(\tilde{\omega}_{A^{\alpha\beta}}(\rho)+\tilde{\varrho}_1(\rho)\big)\\
&\quad +N\big(\tilde{\omega}_{f_{\alpha}}(\rho)+\tilde{\omega}_{g}(\rho)\big).
\end{aligned}
\end{equation}
\end{enumerate}
In the above, $\tilde\omega_{\bullet}$ and $\tilde{\varrho}_1$ are Dini functions derived from $\omega_{\bullet}$ and $\varrho_1$, respectively, as formulated in \eqref{210129@eq6}.
\end{lemma}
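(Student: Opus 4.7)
The plan is Campanato-style. On $B_r(x_0)$ I would freeze the coefficients and data to their piecewise averages $\bar A^{\alpha\beta}_{x_0,r}$, $\bar f_\alpha$, $\bar g$ of $({\bf{B}}2)$ (all depending only on $x^d$) and decompose $(u,p)=(v,\pi)+(w,\sigma)$, where $(w,\sigma)\in W^{1,2}_0(B_r(x_0))^d\times\tilde L^2(B_r(x_0))$ solves the frozen Stokes system
\begin{equation*}
\begin{cases}
D_\alpha(\bar A^{\alpha\beta}D_\beta w)+\nabla\sigma=D_\alpha\bigl((\bar A^{\alpha\beta}-A^{\alpha\beta})D_\beta u+f_\alpha-\bar f_\alpha\bigr),\\
\Div w=g-\bar g-(g-\bar g)_{B_r(x_0)},
\end{cases}
\end{equation*}
so that $(v,\pi):=(u-w,p-\sigma-c)$ satisfies the frozen system. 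Writing $U$ for $u$ as $U=U_v+U_w+(A-\bar A)^{d\beta}D_\beta u+(f_d-\bar f_d)$, with $U_v:=\bar A^{d\beta}D_\beta v+\pi e_d-\bar f_d$ and $U_w:=\bar A^{d\beta}D_\beta w+\sigma e_d$, the triangle inequality reduces the lemma to (a) an $L^{1/2}$-bound on $(Dw,\sigma)$ and (b) a geometric decay of the $\Phi$-functional built from $(D_{x'}v,U_v)$.

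For (a) I would invoke the weak type-$(1,1)$ estimate of \cite[Lemma~3.4]{MR3912724} for the frozen Stokes system (valid because $\bar A$ depends only on $x^d$), obtaining
\begin{equation*}
\Big(\fint_{B_r(x_0)}\bigl(|Dw|+|\sigma|\bigr)^{1/2}\,dx\Big)^{2}\lesssim \fint_{B_r(x_0)}\bigl(|(A-\bar A)Du|+|f_\alpha-\bar f_\alpha|+|g-\bar g|\bigr)\,dx.
\end{equation*}
Applying \eqref{210129@eq2} to $A^{\alpha\beta}$, $f_\alpha$, and $g$ bounds the right-hand side by $\|Du\|_{L^\infty(B_r(x_0))}\bigl(\omega_{A^{\alpha\beta}}(r)+\varrho_1(r)\bigr)+\sum_\alpha\omega_{f_\alpha}(r)+\omega_g(r)+\bigl(\|f_\alpha\|_{L^\infty}+\|g\|_{L^\infty}\bigr)\varrho_1(r)$. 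Evaluating this single-scale estimate on a geometric sequence $r,\,2^{-1}r,\ldots,\rho$ and summing produces the $\tilde\omega,\tilde\varrho_1$ sums on the right-hand side of \eqref{210225@C1} and \eqref{210225@C2}.

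The core step is the decay for the frozen pair $(D_{x'}v,U_v)$. Because $\bar A=\bar A(x^d)$ and $\bar f_\alpha=\bar f_\alpha(x^d)$ are piecewise constant in $x^d$, differentiating the frozen system in any tangential direction $x^k$, $k<d$, shows that $D_{x^k}v$ again solves a Stokes system of the same form, and iteration gives arbitrary tangential smoothness of $D_{x'}v$. In the normal direction, the $k$-th component of the frozen equation yields
\begin{equation*}
D_d U_v^k=-D_{\alpha'}(\bar A^{\alpha'\beta}D_\beta v-\bar f_{\alpha'})^k-D_{x^k}\pi\cdot(1-\delta_{kd}),
\end{equation*}
which is a purely tangential expression. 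Hence $U_v$ carries no jump across any flat interface $\{x^d=\chi_i(x_0')\}$---it is the natural conormal-plus-pressure flux---and, together with the tangential regularity of $D_{x'}v$, the pair $(D_{x'}v,U_v)$ is Lipschitz on $B_r(x_0)$. Choosing $\Theta$ to be the affine Taylor approximant of $(D_{x'}v,U_v)$ at $x_0$ then gives $\Phi^v_{x_0}(x_0,\rho)\lesssim(\rho/r)^\gamma\Phi^v_{x_0}(x_0,r)$ for any $\gamma\in(0,1)$, which is the first term on the right of \eqref{210225@C1}.

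Part~(ii) is the interior counterpart: since $B_{r/2}(y)\subset\widehat{\cD}_{i_1}$ lies inside a single subdomain, in $y$'s coordinate system the frozen coefficients carry no jump and $(v,\pi)$ is classically $C^{1,\rm{Dini}}$, so the analogous decay for $\Phi_y$ is standard. A change of coordinates from $y$'s frame to $x_0$'s frame is an affine rotation of size at most $\varrho_0(r)$, which lets $|\Phi_{x_0}(y,\rho)-\Phi_y(y,\rho)|$ be absorbed into the extra $\|Du\|_{L^\infty}\varrho_1(\rho)$-type term already present on the right of \eqref{210225@C2}. The hard part is the core step above: neither $D_dv$ nor $\pi$ individually is continuous at an interface, so the whole argument hinges on insisting on the specific combination $(D_{x'}v,U_v)$ in which the jump cancels; everything else is Campanato iteration and the Dini book-keeping packaged in $\tilde\omega,\tilde\varrho_1$.
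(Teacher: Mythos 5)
Your treatment of part (i) follows essentially the same route as the paper: freeze the coefficients and data to their piecewise $x^d$-averages, split off a correction controlled by the weak type-$(1,1)$ estimate of \cite[Lemma 3.4]{MR3912724}, use the interior decay of the mean oscillation of $(D_{x'}\cdot,\,\bar A^{d\beta}D_\beta\cdot+\cdot\,e_d)$ for the frozen system, and iterate into the Dini sums $\tilde\omega_\bullet,\tilde\varrho_1$. One point you gloss over: your ``homogeneous'' pair $(v,\pi)$ satisfies $\cL_0 v+\nabla\pi=D_\alpha\bar f_\alpha$ and $\Div v=\bar g(x^d)+c$, which is not the homogeneous system with constant divergence to which the decay estimate \cite[Eq.\ (3.7)]{MR3912724} applies. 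The paper first subtracts an explicit one-dimensional profile, $u_e=u-\int_{-1}^{x^d}u_0\,ds$, $p_e=p-p_0$ with $u_0^d=\bar g$ and $\bar A^{dd}u_0+p_0e_d=\bar f_d$, precisely to remove the $x^d$-dependent right-hand sides; your remark that $D_dU_v^k$ is tangential is the correct heuristic, but as written the jump of $D_dv^d$ forced by $\Div v=\bar g(x^d)+c$ is not accounted for.

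The genuine gap is in part (ii). The hypothesis is only that the \emph{small} ball $B_\rho(y)$ lies in a single subdomain $\widehat\cD_{i_1}$; you assert that $B_{r/2}(y)\subset\widehat\cD_{i_1}$ and that $(v,\pi)$ is ``classically $C^{1,\rm Dini}$'' there, which is false in general --- $B_{r/2}(y)$ will typically cross several interfaces, and if it did not, the $\tilde\varrho_1$ terms in \eqref{210225@C2} would be superfluous. The actual content of (ii) is a frame comparison on the small ball only: because $A^{\alpha\beta}$, $f_\alpha$ have mean oscillation $\lesssim\omega(\rho)$ on $B_\rho(y)$, one bounds $\Phi_{x_0}(y,\rho)$ by the coordinate-free quantity $\Psi(y,\rho)=\inf_{\theta,\Theta}\big(\fint_{B_\rho(y)}|Du-\Theta|^{1/2}+|p-\theta|^{1/2}\big)^2$ up to $\|Du\|_\infty\omega_{A}(\rho)+\omega_{f}(\rho)$, then converts $\Psi(y,\rho)$ back into $\Phi_y(y,\rho)$ in the $y$-frame by algebraically solving for $D_du$ and $p$ from $(D_{x'}u,U)$ via the divergence equation and the invertibility of $(A^{dd}_{ij})_{i,j<d}$ (this inversion is where the extra $\|f_\alpha\|_{L^\infty}\tilde\omega_{A^{\alpha\beta}}$ term in \eqref{210225@C2} comes from), and finally applies part (i) centered at $y$ on the range $[\rho,r/2]$, where interfaces are crossed. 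Your ``rotation of size $\varrho_0(r)$ absorbed into the error'' does not substitute for this: the two frames differ by a rotation whose size is controlled only via the geometry of the nearest interface points, and the paper deliberately routes the comparison through the frame-independent $\Psi$ to avoid estimating that rotation at this stage (it is needed only later, in Step 2, Case 2 of the main proof).
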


\begin{proof}
We may assume that $x_0=0$ for simplicity of notation.
For a given function $f$, we denote by $\bar{f}=\bar{f}(x^d)$ the piecewise constant function in $B_r$ defined as in \eqref{210129@eq2a}.

We first prove the assertion $(i)$.
Let $\cL_0$ be an elliptic operator given by
$$
\cL_0 u=D_{\alpha}(\bar{A}^{\alpha\beta}D_{\beta}u),
$$
and set
$$
u_{e}=u-\int_{-1}^{x^{d}}u_{0}\ ds,\quad p_{e}=p-p_{0},
$$
where $u_{0}=(u_{0}^{1},\dots,u_{0}^{d})^{\top}$ and $p_{0}$ are functions of $x^d$  satisfying
$$
u^d_0=\bar{g}, \quad \bar{A}^{dd}u_{0}+p_{0}e_d=\bar{f}_{d}.
$$
Then $(u_e, p_e)$ satisfies
\begin{align*}
\begin{cases}
\cL_0u_{e}+\nabla p_e=D_{\alpha}F_{\alpha}&\quad\mbox{in~}~B_{r},\\
\Div u_{e}=G&\quad\mbox{in~}~B_{r},
\end{cases}
\end{align*}
where $F_{\alpha}=(\bar{A}^{\alpha\beta}-A^{\alpha\beta})D_{\beta}u+f_{\alpha}-\bar{f}_{\alpha}$ and $G=g-\bar{g}$.
We decompose
\begin{equation}\label{decom}
(u_{e},p_{e})=(v,p_{1})+(w,p_{2}),
\end{equation}
where $(v,p_{1})\in W_{0}^{1,2}(B_{r})^{d}\times \tilde L^{2}(B_{r})$ is a unique weak solution of
$$
\begin{cases}
\cL_0 v+\nabla p_1=D_{\alpha}(I_{B_{r/4}}F_{\alpha})&\quad\mbox{in~}~B_{r},\\
\Div v=I_{B_{r/4}}G-(I_{B_{r/4}}G)_{B_{r}}&\quad\mbox{in~}~ B_{r}.
\end{cases}
$$
Here, $I_{B_{r/4}}$ is the characteristic function.
Then by \cite[Lemma 3.4]{MR3912724} with scaling and relabeling the coordinate axes, we have for all $t>0$ that
$$
\big|\{x\in B_{r/4}: |Dv(x)|+|p_{1}(x)|>t\}\big|\lesssim_{d,\nu}\frac{1}{t}\int_{B_{r/4}}\big(|F_{\alpha}|+|G|\big)\ dx.
$$
This implies that (c.f. \cite[Eq. (4.5)]{MR3912724})
\begin{equation}		\label{210129@eq3}
\bigg(\fint_{B_{r/4}} (|Dv|+|p_1|)^{\frac{1}{2}} \ dx\bigg)^2\lesssim  \fint_{B_{r/4}} (|F_\alpha|+|G|)\ dx.
\end{equation}
On the other hand, since  $(w, p_2)$ satisfies
\begin{align*}
\begin{cases}
\cL_0 w+\nabla p_2=0&\quad\mbox{in~}~B_{r/4},\\
\Div w=(I_{B_{r/4}}G)_{B_{r}}&\quad\mbox{in~}~ B_{r/4},
\end{cases}
\end{align*}
by \cite[Eq. (3.7)]{MR3912724}, we obtain
\begin{equation}\label{Holder L0 w}
\begin{aligned}
&\bigg(\fint_{B_{\kappa r}}\big|D_{x'}w-(D_{x'}w)_{B_{\kappa r}}\big|^{\frac{1}{2}}+\big|W-(W)_{B_{\kappa r}}\big|^{\frac{1}{2}}\ dx\bigg)^{2}\\
&\lesssim \kappa \inf_{\Theta\in\mathbb R^{d\times d}}\bigg(\fint_{B_{r/4}}|(D_{x'}w,W)-\Theta|^{\frac{1}{2}}\ dx\bigg)^{2}
\end{aligned}
\end{equation}
for any $\kappa\in(0,1/8]$, where $W=\bar{A}^{d\beta}D_{\beta}w+p_{2}e_{d}$.

Now we set
$$
U_{e}=\bar{A}^{d\beta}D_{\beta}u_{e}+p_{e}e_{d},
$$
and observe that
\begin{equation}		\label{210129@eq4}
D_{x'}u_e=D_{x'}u, \quad U-U_e=(A^{d\beta}-\bar{A}^{d\beta})D_\beta u-(f_d-\bar{f}_d).
\end{equation}
By \eqref{decom}--\eqref{Holder L0 w} and the triangle inequality, we have
\begin{align*}
&\bigg(\fint_{B_{\kappa r}}\big|D_{x'}u_{e}-(D_{x'}w)_{B_{\kappa r}}\big|^{\frac{1}{2}}+\big|U_{e}-(W)_{B_{\kappa r}}\big|^{\frac{1}{2}}\ dx\bigg)^{2}\nonumber\\
&\lesssim \bigg(\fint_{B_{\kappa r}}\big|D_{x'}w-(D_{x'}w)_{B_{\kappa r}}\big|^{\frac{1}{2}}+\big|W-(W)_{B_{\kappa r}}\big|^{\frac{1}{2}}\ dx\bigg)^{2}\nonumber\\
&\quad +\bigg(\fint_{B_{\kappa r}}(|Dv|+|p_{1}|)^{\frac{1}{2}}\ dx\bigg)^{2}\nonumber\\
&\lesssim \kappa \inf_{\Theta\in\mathbb R^{d\times d}}\bigg(\fint_{B_{r/4}}|(D_{x'}w,W)-\Theta|^{\frac{1}{2}}\ dx\bigg)^{2}+\kappa^{-2d}\fint_{B_{r/4}}(|F_\alpha|+|G|)\ dx
\\
&
\lesssim \kappa \inf_{\Theta\in \bR^{d\times d}}\bigg(\fint_{B_{r}}|(D_{x'}u_e, U_e)-\Theta|^{\frac{1}{2}}\ dx\bigg)^{2}+\kappa^{-2d}\fint_{B_{r}}(|F_\alpha|+|G|)\ dx.
\end{align*}
From this together with \eqref{210129@eq2} and \eqref{210129@eq4}, we get
$$
\begin{aligned}
\Phi_0(0, \kappa r)
&\le N_0\kappa  \Phi_0(0, r)+ N_0 \kappa^{-2d} \|Du\|_{L^\infty(B_{r})}(\omega_{A^{\alpha\beta}}(r)+\varrho_1(r))\\
&\quad + N_0\kappa^{-2d}\big(\|f_\alpha\|_{L^\infty(B_r)}+\|g\|_{L^\infty(B_r)}\big)\varrho_1(r) +N_0\kappa^{-2d}(\omega_{f_\alpha}(r)+\omega_g(r)),
\end{aligned}
$$
where $N_0=N_0(d, \nu,M, \varrho_0)>0$.
Fix $\kappa\in (0, 1/8]$ small enough so that $N_0\kappa^{1-\gamma}\le 1$.
Then,
$$
\begin{aligned}
\Phi_0(0, \kappa r)
&\le \kappa^{\gamma}\Phi_0(0, r)+N\|Du\|_{L^\infty(B_r)}(\omega_{A^{\alpha\beta}}(r)+\varrho_1(r))\\
&\quad + N\big(\|f_\alpha\|_{L^\infty(B_r)}+\|g\|_{L^\infty(B_r)}\big)\varrho_1(r)+N(\omega_{f_\alpha}(r)+\omega_g(r)),
\end{aligned}
$$
where $N=N(d, \nu, M, \varrho_0, \gamma)>0$.
Let  $\tilde{\omega}_{\bullet}$ and $\tilde{\varrho}_0$ be Dini functions defined by
\begin{equation}		\label{210129@eq6}
\begin{aligned}
\tilde{\omega}_{\bullet}(r)&=\sum_{i=1}^\infty \kappa^{\gamma i}\big(\omega_{\bullet}(\kappa^{-i}r)[\kappa^{-i}r<1]+\omega_{\bullet}(1)[\kappa^{-i}r\ge 1]\big),\\
\tilde{\varrho}_{1}(r)&=\sum_{i=1}^\infty \kappa^{\gamma i}\big(\varrho_1(\kappa^{-i}r)[\kappa^{-i}r<1]+\varrho_1(1)[\kappa^{-i}r\ge 1]\big),
\end{aligned}
\end{equation}
where we used the Iverson bracket notation, i.e., $[P] = 1$ if $P$ is true and $[P] = 0$ otherwise.
By iterating and using the fact that
$$
\sum_{i=1}^j \kappa^{\gamma(i-1)}\omega_{\bullet}(\kappa^{j-i}r)\le \kappa^{-\gamma}\tilde{\omega}_{\bullet}(\kappa^jr), \quad j\in \{1,2,\ldots\},
$$
we obtain
\begin{equation}		\label{210201@eq1}
\begin{aligned}
&\Phi_0(0, \kappa^j r)
\le \kappa^{\gamma j}\Phi_0(0, r)+N\|Du\|_{L^\infty(B_r)}\big(\tilde{\omega}_{A^{\alpha\beta}}(\kappa^jr)+\tilde{\varrho}_1(\kappa^jr)\big)\\
&\quad +N\big(\|f_\alpha\|_{L^\infty(B_r)}+\|g\|_{L^\infty(B_r)}\big)\tilde{\varrho}_1(\kappa^jr)+N\big(\tilde{\omega}_{f_\alpha}(\kappa^jr)+\tilde{\omega}_g(\kappa^jr)\big),
\end{aligned}
\end{equation}
which also obviously holds for $j=0$.
Finally,  for $\rho\in (0, r]$, by taking the nonnegative integer  $j$ such that
$\kappa^{j+1}<\rho/r\le \kappa^{j}$ and using \eqref{210201@eq1} with $\rho$ in place of $\kappa^j r$, we get the desired estimate.

Next, we prove the assertion $(ii)$.
For a given function $f$, we define
$$
\hat{f}=\fint_{B_\rho(y)} f(x)\,dx.
$$
Notice from the definition of $U$ that for any $\Theta_\beta\in \bR^d$ and $\theta\in \bR$, we have
$$
|U-\Theta_0|^{\frac{1}{2}}\le \big|(A^{d\beta}-\hat{A}^{d\beta})D_\beta u\big|^{\frac{1}{2}}+\big|\hat{A}^{d\beta} (D_\beta u-\Theta_\beta)\big|^{\frac{1}{2}}+|p-\theta|^{\frac{1}{2}}+|f_d-\hat{f}_d|^{\frac{1}{2}},
$$
where $\Theta_0=\hat{A}^{d\beta}\Theta_\beta+\theta e_d-\hat{f}_d$, in the coordinate system associated with $x_0$.
By averaging the above inequality on $B_\rho(y)$, taking the square,  and using \eqref{210129@eq1} (with the fact that $B_\rho(y)$ is contained in a subdomain), we obtain
$$
\begin{aligned}
\bigg(\fint_{B_\rho(y)}|U-\Theta_0|^\frac{1}{2}\,dx\bigg)^2
&\lesssim \bigg(\fint_{B_{\rho}(y)}|D_\beta u-{\Theta_\beta }|^{\frac{1}{2}}+|p-\theta|^{\frac{1}{2}}\ dx\bigg)^2\\
&\quad +\|Du\|_{L^\infty(B_{\rho}(y))}\omega_{A^{\alpha\beta}}(\rho)+\omega_{f_\alpha}(\rho).
\end{aligned}
$$
From this we get
\begin{equation}		\label{210225@A1}
\Phi_{x_0}(y, \rho)\lesssim \Psi(y,\rho)+\|Du\|_{L^\infty(B_{\rho}(y))}\omega_{A^{\alpha\beta}}(\rho)+\omega_{f_\alpha}(\rho),
\end{equation}
where
$$
\Psi(y, \rho):=\inf_{\substack{\theta\in \bR\\ \Theta\in \bR^{d\times d}}}\bigg(\fint_{B_{\rho}(y)}|Du-{\Theta}|^{\frac{1}{2}}+|p-\theta|^{\frac{1}{2}}\ dx\bigg)^2.
$$
Note that $\Psi(y,\rho)$ is independent of coordinate systems.

We now control the quantity $\Psi(y, \rho)$ in the coordinate system associated with $y$.
Using \eqref{210129@eq1} and the relation
\begin{equation}		\label{210323@EQ2}
D_du^d=g-\sum_{i=1}^{d-1}D_i u^i,
\end{equation}
we have
\begin{align}
\nonumber
&\inf_{\theta\in \bR} \bigg(\fint_{B_{\rho}(y)}|D_d u^d-{\theta}|^{\frac{1}{2}}\ dx\bigg)^2\\
\nonumber
&\lesssim \sum_{i=1}^{d-1}\inf_{{\theta}\in \bR} \bigg(\fint_{B_{\rho}(y)}|D_iu^i-{\theta}|^{\frac{1}{2}}\ dx\bigg)^2+\fint_{B_{\rho}(y)}|g-\hat{g}| \ dx\\
\label{210204@A1}
&\lesssim \Phi_{y} (y, \rho)+\omega_g(\rho).
\end{align}
Note that
\begin{equation}		\label{210420@A1}
\sum_{j=1}^{d-1}A^{dd}_{ij}D_du^j=U^i-\sum_{j=1}^d \sum_{\beta=1}^{d-1}A^{d\beta}_{ij}D_\beta u^j-A^{dd}_{id}D_du^d+f_d^i, \quad i\in \{1,\ldots, d-1\},
\end{equation}
where, by the ellipticity condition on $A^{\alpha\beta}$, $(A^{dd}_{ij})^{d-1}_{i,j=1}$ is nondegenerate.
Hence,
$$
\cX=\cY \cZ,
$$
where
$$
\cX=(D_du^1,\ldots,D_d u^{d-1})^{\top}, \quad \cY=\big((A^{dd}_{ij})^{d-1}_{i,j=1}\big)^{-1},
$$
$$
\cZ=(\cZ^1,\ldots, \cZ^{d-1})^{\top},  \quad \cZ^i=U^i-\sum_{j=1}^d \sum_{\beta=1}^{d-1}A^{d\beta}_{ij}D_\beta u^j-A^{dd}_{id}D_du^d+f_d^i.
$$
Since
$$
|\cX-\hat{\cY}{\vartheta}|\le |(\cY-\hat{\cY})\cZ|+|\hat{\cY}(\cZ-{\vartheta})|, \quad \forall ~\vartheta\in \bR^{d-1},
$$
we see that
$$
\begin{aligned}
&\inf_{{\vartheta}\in \bR^{d-1}} \bigg(\fint_{B_{\rho}(y)}|\cX-{\vartheta}|^{\frac{1}{2}}\ dx\bigg)^2\\
&\lesssim \bigg(\fint_{B_{\rho}(y)}|\cY-\hat{\cY}|^\frac{1}{2}\ dx\bigg)^{2}\|\cZ\|_{L^\infty(B_{\rho}(y))}+\inf_{{\vartheta}\in \bR^{d-1}}\bigg(\fint_{B_{\rho}(y)} |\cZ-{\vartheta}|^{\frac{1}{2}}\ dx\bigg)^2\\
&\lesssim \Phi_{y}(y, \rho) +\big(\|Du\|_{L^\infty(B_\rho(y))}+\|f_\alpha\|_{L^\infty(B_{\rho}(y))}\big)\omega_{A^{\alpha\beta}}(\rho) +\omega_{f_\alpha}(\rho)+\omega_g(\rho)=:K_0.
\end{aligned}
$$
From this together with \eqref{210204@A1}, we get
$$
\inf_{{\Theta}\in \bR^{d\times d}}\bigg(\fint_{B_{\rho}(y)}|Du-{\Theta}|^{\frac{1}{2}}\ dx\bigg)^2\lesssim K_0.
$$
By the relation
\begin{equation}		\label{210323@EQ3}
p=U^d-\sum_{j=1}^d\sum_{\beta=1}^{d}A^{d\beta}_{dj}D_\beta u^j+f_d^d,
\end{equation}
we also have
$$
\inf_{{\theta}\in \bR}\bigg(\fint_{B_{\rho}(y)}|p-{\theta}|^{\frac{1}{2}}\ dx\bigg)^2\lesssim K_0.
$$
Combining these inequalities, we obtain that
$\Psi(y, \rho)\lesssim K_0$,
which together with \eqref{210225@A1} gives $\Phi_{x_0}(y, \rho)\lesssim K_0$.
We finish the proof of the assertion $(ii)$ by applying \eqref{210225@C1} with $y$ and $r/2$ in place of $x_0$ and $r$, to bound  $K_0$ by the right-hand side of \eqref{210225@C2}.
\end{proof}

We are ready to prove Theorem \ref{M1}.

\begin{proof}[Proof of Theorem \ref{M1}]
We adapt the arguments in the proof of \cite[Theorem 1.1]{MR3961984}.
Let  $\tilde\omega_{\bullet}$ and $\tilde{\varrho}_1$ be the Dini functions derived from $\omega_{\bullet}$ and $\varrho_1$, respectively, as formulated in \eqref{210129@eq6} with a fixed $\gamma\in (0,1)$.
We denote
$$
\cF(r)=\int_0^r\frac{\tilde{\omega}_{f_{\alpha}}(t)+\tilde{\omega}_{g}(t)}{t}\ dt.
$$
For given $y\in \cD$ and $\rho>0$ with $B_\rho(y)\subset B_1(x_0)$, we let $\Theta_{x_0}(y, \rho)\in \bR^{d\times d}$ be such that
$$
\Phi_{x_0}(y,\rho)=\bigg(\fint_{B_\rho(y)}|(D_{x'}u, U)-\Theta_{x_0}(y, \rho)|^{\frac{1}{2}}\ dx\bigg)^{2}.
$$
We divide the proof into four steps.
In the first step, we will derive an a priori $L^\infty$-estimate for $(Du, p)$ under the assumption that $(Du, p)$ is locally bounded.
We then obtain an estimate of the modulus of continuity of $(D_{x'}u, U)$ in the second step, from which the  piecewise continuity of $(Du, p)$ follows.
In the third step, we shall derive an a priori estimate of the modulus of continuity of $(Du, p)$ under the additional condition \eqref{210215@eq3}.
In the last step, we shall show that $(Du, p)$ is indeed locally bounded by using the technique of flattening the boundary and a fixed point argument combined with partial Schauder estimates for Stokes systems.

{\bf{Step 1}}.
Let $r\in (0, R_1]$.
Note that   Lemma \ref{210127@lem1} $(i)$ implies
$$
\lim_{i\to \infty} \Phi_{x_0}(x_0, \kappa^ir)=0
$$
for all $x_0\in \cD'$, where $\kappa\in (0,1/8]$ is the constant from the proof of Lemma \ref{210127@lem1}.
Thus, using the assumption that $Du$ and $p$ are bounded, we have
$$
\lim_{i \to \infty} \Theta_{x_0}(x_0, \kappa^{i}r)=(D_{x'}u(x_0), U(x_0))
$$
for a.e. $x_0\in \cD'$, in the coordinate systems associated with $x_0$ satisfying  $({\bf{A}}1)$ and $({\bf{A}}2)$.
By the same iteration argument that led to  \cite[Eq. (4.10)]{MR3912724}, we have
\begin{equation}\label{210226@A1}
|(D_{x'}u(x_0), U(x_0))-\Theta_{x_0}(x_0, r)|\lesssim \sum_{i=0}^\infty \Phi_{x_0} (x_0, \kappa^i r).
\end{equation}
Since
$$
|\Theta_{x_0}(x_0, r)|\lesssim r^{-d} \big(\|D_{x'}u\|_{L^1(B_{r}(x_0))}+\|U\|_{L^1(B_{r}(x_0))}\big),
$$
by Lemma \ref{210127@lem1} $(i)$ and the fact that
\begin{equation}		\label{210226@A2}
\sum_{i=0}^\infty \tilde{\omega}_{\bullet}(\kappa^i r)\lesssim \int_0^{r}\frac{\tilde{\omega}_{\bullet}(t)}{t}\ dt, \quad \sum_{i=0}^\infty \tilde{\varrho}_{1}(\kappa^i r)\lesssim \int_0^{r}\frac{\tilde{\varrho}_{1}(t)}{t}\ dt,
\end{equation}
we obtain
$$
\begin{aligned}
|D_{x'}u(x_0)|+|U(x_0)|&\lesssim_{d, \nu, M, \varrho_0, \gamma} \|Du\|_{L^\infty(B_{r}(x_0))}\int_0^{r} \frac{\tilde{\omega}_{A^{\alpha\beta}}(t)+\tilde{\varrho}_1(t)}{t}\ dt\\
&\quad +r^{-d} \big(\|D_{x'}u\|_{L^1(B_{r}(x_0))}+\|U\|_{L^1(B_{r}(x_0))}\big)\\
&\quad +\big(\|f_\alpha\|_{L^\infty(B_{r}(x_0))}+\|g\|_{L^\infty(B_{r}(x_0))}\big)\int_0^{r} \frac{\tilde{\varrho}_1(t)}{t}\ dt+\cF(r).
\end{aligned}
$$
From this together with the fact that
$$
|Du|+|p|\lesssim_{d, \nu} |D_{x'}u|+|U|+|f_d|+|g|,
$$
we get
$$
\begin{aligned}
&|Du(x_0)|+|p(x_0)|\le N_0\|Du\|_{L^\infty(B_{r}(x_0))}\int_0^{r} \frac{\tilde{\omega}_{A^{\alpha\beta}}(t)+\tilde{\varrho}_1(t)}{t}\ dt\\
&\quad +N_0r^{-d} \big(\|Du\|_{L^1(B_{r}(x_0))}+\|p\|_{L^1(B_{r}(x_0))}\big)\\
&\quad +N_0\big(\|f_\alpha\|_{L^\infty(B_{r}(x_0))}+\|g\|_{L^\infty(B_{r}(x_0))}\big)\bigg(1+\int_0^{r} \frac{\tilde{\varrho}_1(t)}{t}\ dt\bigg)+N_0\cF(r),
\end{aligned}
$$
where $N_0=N_0(d, \nu, M, \varrho_0, \gamma)$.
Taking $r_0\in (0, R_1]$ sufficiently small so that
$$
N_0\int_0^{r_0}\frac{\tilde{\omega}_{A^{\alpha\beta}}(t)+\tilde{\varrho}_1(t)}{t}\ dt\le \frac{1}{3^d},
$$
we have
$$
\begin{aligned}
|Du(x_0)|+|p(x_0)|&\le 3^{-d}\|Du\|_{L^\infty(B_r(x_0))}\\
&\quad +N_0r^{-d} \big(\|Du\|_{L^1(B_r(x_0))}+\|p\|_{L^1(B_r(x_0))}\big)\\
&\quad +N_0\big(\|f_\alpha\|_{L^\infty(B_r(x_0))}+\|g\|_{L^\infty(B_r(x_0))}\big)+N_0\cF(r)
\end{aligned}
$$
for all $r\in (0, r_0]$.
Note that the above inequality holds for a.e. $x_0\in \cD'$ and does not depend on coordinate systems.
Therefore, by the same iteration argument that led to \cite[Eq. (4.16)]{MR3912724}, we obtain the following $L^\infty$-estimate for $Du$ and $p$:
\begin{equation}		\label{210202@eq1}
\begin{aligned}
&\|Du\|_{L^\infty(B_{r/2}(x_0))}+\|p\|_{L^\infty(B_{r/2}(x_0))}\\
&\le N r^{-d} \big(\|Du\|_{L^1(B_r(x_0))}+\|p\|_{L^1(B_r(x_0))}\big)\\
&\quad +N \big(\|f_\alpha\|_{L^\infty(B_r(x_0))}+\|g\|_{L^\infty(B_r(x_0))}\big)+N\cF(r),
\end{aligned}
\end{equation}
where $x_0\in \cD'$ and $r\in (0, R_1]$ with $B_{r}(x_0)\subset \cD'$.
In the above,  $N$ depends only on $d$, $\nu$, $M$, $\varrho_0$, $\omega_{A^{\alpha\beta}}$, and $\gamma$.

{\bf{Step 2}}.
Let $x_0\in \cD'$ and  $r\in (0, R_1]$ with $B_r(x_0)\subset \cD'$, and fix a coordinate system associated with $x_0$ satisfying  $({\bf{A}}1)$ and $({\bf{A}}2)$.
We claim that
\begin{equation}		\label{210429@A1}
\begin{aligned}
&|(D_{x'}u(x_0), U(x_0))-(D_{x'}u(y_0), U(y_0))|\\
&\lesssim r^{-d}\big(\|Du\|_{L^1(B_r(x_0))}+\|p\|_{L^1(B_r(x_0))}\big)\cE(|x_0-y_0|)\\
&\quad +\big(\|f_\alpha\|_{L^\infty(B_r(x_0))}+\|g\|_{L^\infty(B_r(x_0))}\big)\cE(|x_0-y_0|)\\
&\quad +\cF(r)\cE(|x_0-y_0|)+\cF(|x_0-y_0|)
\end{aligned}
\end{equation}
 for any $y_0\in B_{r/4}(x_0)$, where
\begin{equation*}
\cE(|x_0-y_0|):=\bigg(\frac{|x_0-y_0|}{r}\bigg)^\gamma+\int_0^{|x_0-y_0|}\frac{\tilde{\omega}_{A^{\alpha\beta}}(t)+\tilde{\varrho}_1(t)}{t}\ dt.
\end{equation*}
Let $y_0\in B_{r/4}(x_0)$ and $\rho:=|x_0-y_0|$.
We consider the following two cases:
$$
B_\rho(y_0)\subset \widehat{\cD}_{i_0},  \quad B_\rho(y_0)\not\subset \widehat{\cD}_{i_0}.
$$

{\em{Case 1}}.
$B_{\rho}(y_0)\subset \widehat{\cD}_{i_0}$.
By the triangle inequality,  we have
$$
\begin{aligned}
&|(D_{x'}u(x_0), U(x_0))-(D_{x'}u(y_0), U(y_0))|^{\frac{1}{2}}\\
&\le |(D_{x'}u(x_0), U(x_0))-\Theta_{x_0}(x_0, \rho)|^{\frac{1}{2}}+|(D_{x'}u(x), U(x))-\Theta_{x_0}(x_0,\rho)|^{\frac{1}{2}}\\
&\quad +|(D_{x'}u(y_0), U(y_0))-\Theta_{x_0}(y_0,\rho)|^{\frac{1}{2}}+|(D_{x'}u(x), U(x))-\Theta_{x_0}(y_0,\rho)|^{\frac{1}{2}}.
\end{aligned}
$$
for all  $x\in B_\rho(x_0) \cap B_\rho(y_0)$.
Taking the average over $x\in B_\rho(x_0)\cap B_\rho(y_0)$ and then taking the square, we obtain that
$$
|(D_{x'}u(x_0), U(x_0))-(D_{x'}u(y_0), U(y_0))|\lesssim I_1+I_2,
$$
where
$$
\begin{aligned}
I_1&=|(D_{x'}u(x_0), U(x_0))-\Theta_{x_0}(x_0, \rho)|+\Phi_{x_0}(x_0, \rho),\\
I_2&=|(D_{x'}u(y_0), U(y_0))-\Theta_{x_0}(y_0, \rho)|+\Phi_{x_0}(y_0, \rho).
\end{aligned}
$$
Note that by \eqref{210226@A1},  we have
$$
I_1\lesssim \sum_{i=0}^\infty \Phi_{x_0}(x_0, \kappa^i \rho).
$$
It follows from Lemma \ref{210127@lem1} $(ii)$ that
$$
\lim_{i\to \infty}\Phi_{x_0}(y_0, \kappa^{i}r)=0.
$$
Then by replicating a similar argument that used in \eqref{210226@A1}, we obtain
$$
I_2\lesssim \sum_{i=0}^\infty\Phi_{x_0}(y_0, \kappa^{i}\rho).
$$
Therefore, by Lemma \ref{210127@lem1}, \eqref{210226@A2}, and \eqref{210202@eq1}, we get
\eqref{210429@A1}.

{\em{Case 2}}.
$B_\rho(y_0)\not\subset \widehat{\cD}_{i_0}$.
In this case, for simplicity of notation, we assume that $y_0=0$.
Suppose that $0\in \widehat{\cD}_{i_1}\cup \partial \widehat{\cD}_{i_1}$ for some $i_1\in \{1,\ldots, \ell+1\}$ and denote by $\tilde{y}_0$ the closest point on $\partial \widehat{\cD}_{i_1}$ to the origin.
We also denote $\tilde{x}_0=(x_0', \chi_{i_0}(x_0'))$, which is the closest point on $\partial \widehat{\cD}_{i_0}$ to $x_0$.
Since
$|\tilde{y}_0|<\rho$ and $|\tilde{x}_0-x_0|<2\rho$, we have
\begin{equation}		\label{210323@eq1}
|\tilde{x}_0-\tilde{y}_0|\le |\tilde{x}_0-x_0|+|x_0|+|\tilde{y}_0|<4\rho<r\le R_1.
\end{equation}
Let
$$
y=\Lambda x, \quad x=\Lambda^{-1}y=\Gamma y,
$$
where $\Lambda$ is a $d\times d$ rotation matrix from the coordinate systems associated with $x_0$ to a coordinate system associated with the origin.
Then by \eqref{210323@eq1} and the same argument as in \cite[pp. 2465--2466]{MR3961984}, we see that
$$
|{\bf{I}}-\Gamma|\lesssim \varrho_1(4\rho),
$$
where ${\bf{I}}$ is the $d\times d$ identity matrix.
From the definition of $\tilde{\varrho}_1$ and \eqref{210226@A2}, it follows that
\begin{equation}		\label{210323@eq1a}
|{\bf{I}}-\Gamma|\lesssim \tilde{\varrho}_1(\rho)\lesssim \int_0^\rho \frac{\tilde{\varrho}_1(t)}{t}\,dt.
\end{equation}

Now we  set
$$
v(y)=\Lambda u(x), \quad \pi(y) =p(x),
$$
which satisfies
$$
\begin{cases}
D_\alpha (\cA^{\alpha\beta}D_\beta v)+\nabla \pi=D_\alpha F_\alpha,\\
\Div v=G,
\end{cases}
$$
where
$$
\cA^{\alpha\beta}(y)=\Lambda (\Lambda^{\alpha k}\Lambda^{\beta l}A^{kl}(x))\Gamma,
$$
$$
(F_1, \ldots, F_d)(y)=\Lambda (f_1,\ldots, f_d)(x)\Gamma, \quad \quad G(y)=g(x).
$$
We also denote
$$
V=\cA^{d\beta}D_\beta v+\pi e_d-F_d.
$$
By the triangle inequality, we have
$$
\begin{aligned}
&|(D_{x'}u(x_0), U(x_0))-(D_{x'}u(0), U(0))|^{\frac{1}{2}}\\
&\le |(D_{x'}u(x_0), U(x_0))-\Theta_{x_0}(x_0, \rho)|^\frac{1}{2}+|(D_{x'}u(x), U(x))-\Theta_{x_0}(x_0, \rho)|^\frac{1}{2}\\
&\quad +|\Gamma(D_{y'}v(0), V(0))-\Gamma\Theta_{0}(0, \rho)|^{\frac{1}{2}}+|\Gamma (D_{y'}v(\Lambda x), V(\Lambda x))-\Gamma \Theta_{0}(0,\rho)|^{\frac{1}{2}}\\
&\quad +|(D_{x'}u(0), U(0))-\Gamma (D_{y'}v(0), V(0))|^{\frac{1}{2}}\\
&\quad +|(D_{x'}u(x), U(x))-\Gamma(D_{y'}v(\Lambda x), V(\Lambda x))|^{\frac{1}{2}}
\end{aligned}
$$
for any  $x\in B_\rho(x_0) \cap B_\rho(0)$, where $\Gamma(D_{y'}v,V):=(\Gamma D_{y'}v,\Gamma V)$.
Taking the average over $x\in B_\rho(x_0)\cap B_\rho(0)$ and then taking the square, we obtain that
\begin{equation}		\label{210323@eq5}
|(D_{x'}u(x_0), U(x_0))-(D_{x'}u(0), U(0))|\lesssim J_1+J_2+J_3,
\end{equation}
where
$$
\begin{aligned}
J_1&=|(D_{x'}u(x_0), U(x_0))-\Theta_{x_0}(x_0, \rho)|+\Phi_{x_0}(x_0, \rho),\\
J_2&=|(D_{y'}v(0), V(0))-\Theta_{0}(0, \rho)|+\Phi_{0}(0, \rho),\\
J_3&=\operatorname*{ess\,sup}_{x\in B_\rho(x_0)\cap B_\rho(0)} |(D_{x'}u(x), U(x))-\Gamma(D_{y'}v(\Lambda x), V(\Lambda x))|.
\end{aligned}
$$
Note that $J_1$ and $J_2$ can be estimated by Lemma \ref{210127@lem1} $(i)$, \eqref{210226@A2}, and \eqref{210202@eq1} in the same way as in  {\em{Case 1}}.
For the estimate of $J_3$, we observe that
$$
D_{x'}u(x)-\Gamma D_{y'} v(\Lambda x)=D_{x}u(x)-\Gamma D_y v(\Lambda x){\bf I}_0=D_{x}u(x) ({\bf I}-\Gamma){\bf I}_0,
$$
where ${\bf I}_0=(I_0^{\alpha\beta})$ is a $d\times (d-1)$ matrix with
$$I_0^{\alpha\beta}=\delta_{\alpha\beta}~\mbox{for}~\alpha,\beta=1,\dots,d-1;\quad I_0^{d\beta}=0~\mbox{for}~\beta=1,\dots,d-1,$$
and
$$
\begin{aligned}
U(x)-\Gamma V(\Lambda x)&=(1-\Lambda^{k\alpha})A^{\alpha\beta}(x) D_\beta u(x)\\
&\quad +p(x)(I-\Gamma)e_d+(f_1,\ldots, f_d)(x)({\bf{I}}-\Gamma)^{\cdot d},
\end{aligned}
$$
where $({\bf{I}}-\Gamma)^{\cdot d}$ is the $d$th column of ${\bf{I}}-\Gamma$.
Hence by \eqref{210226@A2} and \eqref{210323@eq1a}, we have
$$
\begin{aligned}
J_3&\lesssim \big(\|Du\|_{L^\infty(B_{r/4}(x_0))}+\|p\|_{L^\infty(B_{r/4}(x_0))}+\|f_\alpha\|_{L^\infty(B_{r}(x_0))}\big)\int_0^\rho \frac{\tilde{\varrho_1}(t)}{t}\,dt\\
&\lesssim r^{-d}\big(\|Du\|_{L^1(B_r(x_0))}+\|p\|_{L^1(B_r(x_0))}\big)\int_0^\rho \frac{\tilde{\varrho_1}(t)}{t}\,dt\\
&\quad +\big(\|f_\alpha\|_{L^\infty(B_r(x_0))}+\|g\|_{L^\infty(B_r(x_0))}\big)\int_0^\rho \frac{\tilde{\varrho_1}(t)}{t}\,dt+\cF(r)\int_0^\rho \frac{\tilde{\varrho_1}(t)}{t}\,dt.
\end{aligned}
$$
Using this together with the estimates $J_1$ and $J_2$, we get \eqref{210429@A1} from \eqref{210323@eq5}.

Note that the piecewise continuity of $(Du, p)$ follows from the estimate \eqref{210429@A1} combined with the fact that the coefficients and data are piecewise continuous.
Indeed, by using the relations \eqref{210323@EQ2}, \eqref{210420@A1},  and \eqref{210323@EQ3}, and using the triangle inequality, we have that
$$
|D_d u^d(x_0)-D_{d}u^d(y_0)|\le |D_{x'}u(x_0)-D_{x'}u(y_0)|+|g(x_0)-g(y_0)|,
$$
$$
\begin{aligned}
|\cX(x_0)-\cX(y_0)|&\lesssim_{d, \nu}|(D_{x'}u(x_0), U(x_0))-(D_{x'}u(y_0), U(y_0))|\\
&\quad +\big(\|Du\|_{L^\infty(B_{r/4}(x_0))}+\|p\|_{L^\infty(B_{r/4}(x_0))}\big)|A^{\alpha\beta}(x_0)-A^{\alpha\beta}(y_0)|\\
&\quad +\|f_\alpha\|_{L^\infty(B_{r/4}(x_0))}|A^{\alpha\beta}(x_0)-A^{\alpha\beta}(y_0)|\\
&\quad +|D_d u^d(x_0)-D_du^d(y_0)|+|f_\alpha(x_0)-f_\alpha(y_0)|,
\end{aligned}
$$
where $\cX=(D_du^1,\ldots, D_d u^{d-1})^{\top}$,
and
$$
\begin{aligned}
|p(x_0)-p(y_0)|
&\lesssim_{d, \nu}|(D_{x'}u(x_0), U(x_0))-(D_{x'}u(y_0), U(y_0))|\\
&\quad + \|Du\|_{L^\infty(B_{r/4}(x_0))}|A^{\alpha\beta}(x_0)-A^{\alpha\beta}(y_0)|+|f_\alpha(x_0)-f_\alpha(y_0)|.
\end{aligned}
$$
Therefore, by \eqref{210202@eq1} and \eqref{210429@A1}, we obtain that
\begin{align}\label{210429@B1}
&|(Du(x_0), p(x_0))-(Du(y_0), p(y_0))|\nonumber\\
&\le N  r^{-d}\big(\|Du\|_{L^1(B_r(x_0))}+\|p\|_{L^1(B_r(x_0))}\big)
\big(\cE(|x_0-y_0|)+|A^{\alpha\beta}(x_0)-A^{\alpha\beta}(y_0)|\big)\nonumber\\
&\quad + N\big(\|f_\alpha\|_{L^\infty(B_r(x_0))}+\|g\|_{L^\infty(B_r(x_0))}\big)
\big(\cE(|x_0-y_0|)+|A^{\alpha\beta}(x_0)-A^{\alpha\beta}(y_0)|\big)\nonumber\\
&\quad + N\cF(r)\big(\cE(|x_0-y_0|)+|A^{\alpha\beta}(x_0)-A^{\alpha\beta}(y_0)|\big)+N\cF(|x_0-y_0|)\nonumber\\
&\quad + N|f_\alpha(x_0)-f_{\alpha}(y_0)|+N|g(x_0)-g(y_0)|
\end{align}
for any $x_0, y_0\in \cD'$ and $r\in (0, R_1]$ satisfying $y_0\in B_{r/4}(x_0)\subset B_r(x_0)\subset \cD'$,
which gives the piecewise continuity of $(Du, p)$.

{\bf{Step 3}}.
In this step, we derive the corresponding estimate of \eqref{210429@B1} under the additional stronger \eqref{210215@eq3}. We again let $x_0\in \cD'$ and  $r\in (0, R_1]$ with $B_r(x_0)\subset \cD'$, and fix a coordinate system associated with $x_0$ satisfying $({\bf{A}}1)$ and $({\bf{A}}2)$.
To present the precise dependence of the constant in the estimates, we assume that
\begin{equation}		\label{210215@A1}
\varrho_0(r)\le K_0 r^{\frac{\gamma_0}{1-\gamma_0}}, \quad \omega_{A^{\alpha\beta}}(r)\le K_0 r^{\gamma_0}, \quad \omega_{f_\alpha}(r)+\omega_g(r)\le K_1r^{\gamma_0}
\end{equation}
for some constants $K_0,K_1>0$.
Thus  if $f_\alpha$ and $g$ are in $C^{\gamma_0}(\overline{\cD}_i)$ for each $i\in \{1,\ldots, M\}$, then
$K_1$ can be regarded as
$$
\max_{1\le i\le M}\big\{[f_\alpha]_{C^{\gamma_0}(\overline{\cD}_i)}+[g]_{C^{\gamma_0}(\overline{\cD}_i)}\big\}.
$$
From \cite[Lemma 5.1]{MR1770682} it follows that for any $r\in (0, R_1]$,
$$
r^{-d}|(\widehat{\cD}_i \setminus \Omega_i)\cap B_{r}(x_{0})|\lesssim_{d, M, K_0, \gamma_0} r^{\gamma_0}=:\varrho_1(r).
$$
Hence we have
$$
\tilde{\omega}_{A^{\alpha\beta}}(r)+\tilde{\varrho}_1(r)\lesssim_{d, M, K_0, \gamma_0} r^{\gamma_0}
$$
and
$$
\tilde{\omega}_{f_\alpha}(r)+\tilde{\omega}_g(r)\lesssim_{d, M, K_0, \gamma_0} K_1r^{\gamma_0}.
$$
Therefore by \eqref{210429@B1} with $\gamma=\frac{1+\gamma_0}{2}$, we conclude  that
\begin{align}\label{210209@C1}
&|(Du(x_0), p(x_0))-(Du(y_0), p(y_0))|\nonumber\\
&\le N r^{-d}\big(\|Du\|_{L^1(B_r(x_0))}+\|p\|_{L^1(B_r(x_0))}\big)\bigg(\frac{|x_0-y_0|^{\gamma_0}}{r^{\gamma_0}}+|A^{\alpha\beta}(x_0)-A^{\alpha\beta}(y_0)|\bigg)\nonumber\\
&\quad +N \big(\|f_\alpha\|_{L^\infty(B_r(x_0))}+\|g\|_{L^\infty(B_r(x_0))}\big)\bigg(\frac{|x_0-y_0|^{\gamma_0}}{r^{\gamma_0}}+|A^{\alpha\beta}(x_0)-A^{\alpha\beta}(y_0)|\bigg)\nonumber\\
&\quad +N K_1\big(|x_0-y_0|^{\gamma_0}+|A^{\alpha\beta}(x_0)-A^{\alpha\beta}(y_0)|\big)\nonumber\\
&\quad +N |f_\alpha(x_0)-f_{\alpha}(y_0)|+N|g(x_0)-g(y_0)|,
\end{align}
where
 $N=N(d, \nu, M, K_0,\gamma_0)$.
 We can see from \eqref{210209@C1} that if $x_0$ and $y_0$ are in the same subdomain, then the estimate of the modulus of continuity of $(Du, p)$ is established.

{\bf{Step 4}}.
In this last step, we prove the local boundedness of $(Du, p)$.
We first observe that
\begin{equation}		\label{210411@A1}
(Du, p)\in L^q_{\rm{loc}}(\cD)^d\times L^q_{\rm{loc}}(\cD) \quad \text{for any }\, q<\infty.
\end{equation}
Indeed, since $(u, p)$ satisfies \eqref{210201@eq4}, where the coefficients $A^{\alpha\beta}$ are of variably partially small bounded mean oscillation (variably partially BMO) satisfying \cite[Assumption 2.2 $(\rho)$ $(i)$]{MR3758532} for any $\rho>0$ and the data $f_\alpha$, $g$ are bounded, by applying a local version of \cite[Theorem 2.4]{MR3758532} combined with a bootstrap argument, we get \eqref{210411@A1}.

Due to the regularity result in \cite{MR3912724}, where the authors proved $W^{1,\infty}$-estimates for solutions to Stokes systems with (partially) Dini mean oscillation coefficients in a ball, it suffices to show that for $x_0=(x_0', x_0^d)\in \partial \cD_i$, $i\in \{1,\ldots, M-1\}$, there is a neighborhood of $x_0$ in which $(Du, p)$ is bounded.
Recall that $x_0$ belongs to the boundaries of at most two of the subdomains.
Thus we can find a small $r_0>0$ and a $C^{1,\rm{Dini}}$ function, say $\chi:\bR^{d-1}\to \bR$, such that $B_{r_0}(x_0)$ is divided into two disjoint subdomains separated by $\chi$ and
$|\nabla_{x'}\chi(x_0')|=0$ in a coordinate system.
Here, we choose $r_0$ small enough so that
\begin{equation}		\label{210411@A2}
|\nabla_{x'} \chi(x')|\le \mu_0\quad \text{if }\, |x'-x_0'|\le r_0,
\end{equation}
where $\mu_0>0$ is a constant to be chosen below.
Without loss of generality, we assume that $x_0=(0',0)$ and $\chi(0')=0$.
For sufficiently small $\varepsilon>0$, we let $\chi_{\varepsilon}$ be a standard mollification of $\chi$ with respect to $x'$.
We also let $\phi\in C^\infty_0(B_1)$ be a smooth non-negative function with unit integral, and  define piecewise mollifications of  $A^{\alpha\beta}$ by
$$
A^{\alpha\beta}_{\varepsilon}(x)=
\int_{B_\varepsilon(x_\varepsilon)}\phi_{\varepsilon}(x_\varepsilon-y) A^{\alpha\beta}(y)\,dy=\int_{B_\varepsilon} \phi_{\varepsilon}(y)A^{\alpha\beta}(x_{\varepsilon}-y)\,dy,
$$
where $\phi_{\varepsilon}(x)=\varepsilon^{-d}\phi (x/\varepsilon)$ and
$$
x_{\varepsilon}=
\begin{cases}
x+\lambda \varepsilon e_d &\quad \text{if }\, x^d>\chi_{\varepsilon}(x'),\\
x-\lambda \varepsilon e_d &\quad \text{if }\, x^d< \chi_{\varepsilon}(x').
\end{cases}
$$
Here $\lambda$ is large enough, say $\lambda=\mu_0+1$.
Similarly, we define $f_{\alpha, \varepsilon}$ and $g_{\varepsilon}$.
Then the piecewise mollifications are piecewise Dini mean oscillation in $B_{r_0}$ with
$$
\omega_{\bullet_{\varepsilon}}(r)\le \omega_{\bullet}(r).
$$
Let $(\tilde{u}_\varepsilon, \tilde{p}_\varepsilon)$ be the weak solution in $W^{1,2}_{0}(B_{r_0})^d\times \tilde L^2(B_{r_0})$  to the problem
\begin{equation}		\label{210420@eq1}
\begin{cases}
D_\alpha(A^{\alpha\beta}_{\varepsilon}D_\beta \tilde{u}_\varepsilon)+\nabla \tilde{p}_\varepsilon=D_\alpha (f_\alpha-f_{\alpha, \varepsilon})+D_\alpha(({A}^{\alpha\beta}_{\varepsilon}-A^{\alpha\beta}) D_\beta u),\\
\Div \tilde{u}_\varepsilon=g-g_\varepsilon-(g-g_\varepsilon)_{B_{r_0}}.
\end{cases}
\end{equation}
Since $f_{\alpha, \varepsilon}\to f_{\alpha}$ in $L^2$, $g_\varepsilon\to g$ in $L^2$, and ${A}^{\alpha\beta}_{\varepsilon}\to A^{\alpha\beta}$ a.e., by the dominated convergence theorem,
the right-hand sides of \eqref{210420@eq1} go to zero in $L^2$ as $\varepsilon\to 0^+$. By the $W^{1,2}$-estimate, we see that
$$
\|D\tilde{u}_{\varepsilon}\|_{L^2(B_{r_0})}+\|\tilde{p}_{\varepsilon}\|_{L^2(B_{r_0})}\to 0 \quad \text{as }\, \varepsilon\to 0^+,
$$
and thus, there is a subsequence, still denoted by $(\tilde{u}_{\varepsilon}, \tilde{p}_{\varepsilon})$, such that $|D\tilde{u}_{\varepsilon}|+|\tilde{p}_{\varepsilon}|\to 0$ a.e. in $B_{r_0}$.

Now we set $(u_\varepsilon, p_\varepsilon)=(u-\tilde{u}_{\varepsilon}, p-\tilde{p}_\varepsilon)\in W^{1,2}(B_{r_0})^d\times L^2(B_{r_0})$, which satisfies
\begin{equation}		\label{210421@B1}
\begin{cases}
D_\alpha(A^{\alpha\beta}_{\varepsilon}D_\beta {u}_\varepsilon)+\nabla {p}_\varepsilon=D_\alpha f_{\alpha, \varepsilon},\\
\Div u_\varepsilon=g_\varepsilon+(g-g_\varepsilon)_{B_{r_0}}
\end{cases}
\end{equation}
in $B_{r_0}$.
By the same reasoning as in \eqref{210411@A1}, it holds that
$$
(Du_{\varepsilon}, p_{\varepsilon})\in L^q_{\rm{loc}}(B_{r_0})^{d\times d}\times L^q_{\rm{loc}}(B_{r_0}) \quad \text{for any $q<\infty$}.
$$
We shall prove that $(Du_{\varepsilon}, p_{\varepsilon})$ is  bounded near the origin so that  \eqref{210202@eq1} can be applied to the above system, which gives uniform $L^\infty$-estimate of $(Du_{\varepsilon}, p_{\varepsilon})$.
To this end, we fix $\varepsilon>0$ and let
$$
y=\Lambda(x)=(x', x^d-\chi_\varepsilon(x')), \quad x=\Lambda^{-1}(y)=\Gamma(y)=(y', y^d+\chi_\varepsilon(y')).
$$
Then $(v(y), \pi (y))=(u_\varepsilon(x), p_\varepsilon(x))$ satisfies
\begin{equation}		\label{210410@eq1}
\begin{cases}
D_\alpha(\cA^{\alpha\beta} D_\beta v)+\nabla \pi=D_{\alpha}F_{\alpha}+D_d(\pi b),\\
\Div v=G+D_d v\cdot b
\end{cases}
\end{equation}
in $B_{r_1}$ with a sufficiently small $r_1>0$ so that $\overline{B_{r_1}}\subset \Lambda(B_{r_0})$,
where
$$
\cA^{\alpha\beta}(y)=D_l \Lambda^\beta D_k \Lambda^\alpha A_\varepsilon^{kl}(x), \quad F_\alpha(y)=D_k \Lambda^\alpha f_{k, \varepsilon}(x),
$$
$$
G(y)=g_\varepsilon(x)+(g-g_\varepsilon)_{B_{r_0/2}}, \quad b(y)=\big(D_1\chi_\varepsilon(y'), \ldots, D_{d-1}\chi_\varepsilon(y'), 0\big).
$$
Note that the coefficients and data are of partially Dini mean oscillation  in $B_{r_1}$ except $\pi b$ and $D_d v\cdot b$, which are only known to be in $L^q(B_{r_1})$ for $q<\infty$.
Thus we are not able to apply the result in \cite[Theorem 2.2]{MR3912724} to \eqref{210410@eq1} directly.
To overcome this  difficulty, we use the following fixed point argument.

Let $\eta$ be an infinitely differentiable function in $\bR^d$ such that
$$
0\le \eta\le 1, \quad \eta\equiv 1 \, \text{ in }\, B_{r_1/2}, \quad \operatorname{supp}\eta\subset B_{r_1}.
$$
Then we see that  $(\eta v, \eta \pi)$ satisfies
\begin{equation}		\label{210410@eq2}
\begin{cases}
D_\alpha({\cA}^{\alpha\beta}D_\beta(\eta v))+\nabla (\eta \pi)=D_\alpha\tilde{F}_\alpha+\tilde{F}+D_d (\eta \pi b),\\
\Div (\eta v)=\tilde{G}+D_d (\eta v)\cdot b
\end{cases}
\end{equation}
in $B_{r_1}$, where
$$
\tilde{F}_\alpha=\eta F_\alpha+\cA^{\alpha\beta} D_\beta \eta v, \quad
\tilde{F}=\cA^{\alpha\beta}D_{\alpha}\eta D_{\beta}v-D_\alpha \eta F_\alpha -D_d \eta \pi b+\nabla \eta \pi,
$$
$$
\tilde{G}=\eta G+\nabla \eta \cdot v-D_d \eta v\cdot b.
$$
For each positive integer $k$, let $(v^{(k)}, \pi^{(k)})$ be the weak solution in $W^{1,2}_0(B_{r_1})^d\times \tilde{L}^2(B_{r_1})$  to the problem
$$
\begin{cases}
D_\alpha(\cA^{\alpha\beta}D_\beta v^{(k)})+\nabla \pi^{(k)}=D_\alpha\tilde{F}_{\alpha}+\tilde{F}+D_d (\pi^{(k-1)} b),\\
\Div v^{(k)}=\tilde{G}+D_d v^{(k-1)}\cdot b-(\tilde{G}+D_dv^{(k-1)}\cdot b)_{B_{r_1}}
\end{cases}
$$
in $B_{r_1}$, where $(v^{(0)}, \pi^{(0)})=(0,0)$.
By applying the $W^{1,2}$-estimate to
\begin{equation}		\label{210421@A1}
\big(v^{(k+1)}-v^{(k)}, \pi^{(k+1)}-\pi^{(k)}\big)
\end{equation}
  and using \eqref{210411@A2},
we have
\begin{equation}		\label{210415@A2}
\begin{aligned}
&\|Dv^{(k+1)}-Dv^{(k)}\|_{L^2(B_{r_1})}+\|\pi^{(k+1)}-\pi^{(k)}\|_{L^2(B_{r_1})}\\
&\le N_0\big\|\big(D v^{(k)}-D v^{(k-1)}\big)  b\big\|_{L^2(B_{r_1})}+N_0\big\|\big(\pi^{(k)}-\pi^{(k-1)}\big) b\big\|_{L^2(B_{r_1})}\\
&\le\mu_0 N_0\big\|D v^{(k)}-D v^{(k-1)} \big\|_{L^2(B_{r_1})}+\mu_0 N_0\big\|\pi^{(k)}-\pi^{(k-1)} \big\|_{L^2(B_{r_1})},
\end{aligned}
\end{equation}
where the constant $N_0$ is independent of $\varepsilon$ and $\{(v^{(k)}, \pi^{(k)})\}$ .
We take $r_0$ sufficiently small so that \eqref{210411@A2} holds with $\mu_0=1/(2N_0)$. Then by the fixed point theorem, there exists
$$
(v^*, \pi^*)=(v^*_{\varepsilon}, \pi^*_{\varepsilon})\in W^{1,2}_0(B_{r_1})^d\times \tilde{L}^2(B_{r_1})
$$
such that as $k\rightarrow\infty$,
$$
v^{(k)}\to v^* \quad \text{in }\, W^{1,2}_0(B_{r_1}), \quad \pi^{(k)}\to \pi^* \quad \text{in }\, L^2(B_{r_1})
$$
and that in $B_{r_1}$,
\begin{equation}		\label{210415@A1}
\begin{cases}
D_\alpha(\cA^{\alpha\beta}D_\beta v^{*})+\nabla \pi^{*}=D_\alpha\tilde{F}_{\alpha}+\tilde{F}+D_d (\pi^* b),\\
\Div v^{*}=\tilde{G}+D_d v^{*}\cdot b-(\tilde{G}+D_dv^{*}\cdot b)_{B_{r_1}}.
\end{cases}
\end{equation}
From \eqref{210410@eq2} and \eqref{210415@A1}, it follows that in $B_{r_1}$,
$$
\begin{cases}
D_\alpha(\cA^{\alpha\beta}D_\beta ((\eta v-v^{*}))+\nabla (\eta\pi -(\eta \pi)_{B_{r_1}}-\pi^{*})=D_d ((\eta\pi-\pi^*) b),\\
\Div (\eta v-v^{*})=D_d (\eta v-v^{*})\cdot b+(\tilde{G}+D_dv^{*}\cdot b)_{B_{r_1}}.
\end{cases}
$$
Note that $D_d b=0$ and $(\tilde{G}+D_d (\eta v)\cdot b)_{B_{r_1}}=0$.
Hence by the $W^{1,2}$-estimate with the smallness of $b$, we obtain that
$$
\eta v=v^*, \quad \eta\pi-(\eta \pi)_{B_{r_2}}=\pi^*.
$$

Next, let $\rho_0\in (0, r_0]$ be small enough so that
\begin{equation}		\label{210422@C1}
|\nabla_{x'}\chi(x')|\le \mu_1 \quad \text{if }\, |x'|\le \rho_0,
\end{equation}
where $\mu_1$ is a constant to be chosen below.
We also let $\rho_1\in (0, r_1]$ such that $\overline{B_{\rho_1}}\subset \Lambda (B_{\rho_0})$.
Since $\cA^{\alpha\beta}$, $\tilde{F}_{\alpha}$, and $\tilde{G}$ are partially H\"older continuous with respect to $y'$, $\tilde{F}_d\in L^\infty(B_{\rho_1})$,  and $\tilde{F}\in L^q(B_{\rho_1})$ for all $q<\infty$, by applying \cite[Theorem 2.2 (b) and Remark 2.4]{MR3912724} combined with covering and scaling arguments, we obtain that
$$
(Dv^{(1)}, \pi^{(1)})\in L^{\infty}(B_{\rho})^{d\times d}\times L^\infty(B_\rho) \quad \text{for all $\rho<\rho_1$}.
$$
Moreover,
$$
\cA^{d\beta} D_\beta v^{(1)}+\pi^{(1)}e_d\in C^{\delta}_{x'}(B_\rho)^d, \quad  D_{x'}v^{(1)}\in C^\delta(B_\rho)^{d\times (d-1)},
$$
from which  we get
$$
(Dv^{(1)}, \pi^{(1)})\in C^{\delta}_{x'}(B_{\rho})^{d\times d}\times C^\delta_{x'}(B_\rho) \quad \text{for all }\delta\in (0,1).
$$
Repeating this procedure, we see that
$$
(Dv^{(k)}, \pi^{(k)})\in \big(L^{\infty}(B_{\rho})^{d\times d}\times L^\infty(B_\rho)\big)\cap \big(C^{\delta}_{x'}(B_{\rho})^{d\times d}\times C^\delta_{x'}(B_\rho)\big)
$$
for any positive integer $k$.
Hence, from the estimates in the proof of \cite[Theorem 2.2 (b)]{MR3912724} applied to \eqref{210421@A1} with covering and scaling arguments,
we deduce that for any $0<s<\rho<\rho_1$,
$$
\begin{aligned}
&\|Dv^{(k+1)}-Dv^{(k)}\|_{L^\infty(B_{s})}+\|\pi^{(k+1)}-\pi^{(k)}\|_{L^\infty(B_{s})}\\
&+(\rho-s)^{\delta}\Big(\big[Dv^{(k+1)}-Dv^{(k)}\big]_{C^\delta_{x'}(B_{s})}+\big[\pi^{(k+1)}-\pi^{(k)}\big]_{C^\delta_{x'}(B_{s})}\Big)\\
&\le N_1(\rho-s)^{-d}\Big(\|Dv^{(k+1)}-Dv^{(k)}\|_{L^1(B_\rho)}+\|\pi^{(k+1)}-\pi^{(k)}\|_{L^1(B_\rho)}\Big)\\
&\quad +N_1\Big(\big\|\big(D v^{(k)}-D v^{(k-1)}\big)  b\big\|_{L^\infty(B_{\rho})}+\big\|\big(\pi^{(k)}-\pi^{(k-1)}\big) b\big\|_{L^\infty(B_{\rho})}\Big)\\
&\quad +N_1(\rho-s)^\delta\Big(\big[\big(D v^{(k)}-D v^{(k-1)}\big)  b\big]_{C^\delta_{x'}(B_{\rho})}+\big[\big(\pi^{(k)}-\pi^{(k-1)}\big) b\big]_{C^\delta_{x'}(B_{\rho})}\Big)\\
&\le \mu_0 N_0N_1(\rho-s)^{-d/2}\Big(\|Dv^{(k)}-Dv^{(k-1)}\|_{L^2(B_{r_1})}+\|\pi^{(k)}-\pi^{(k-1)}\|_{L^{2}(B_{r_1})}\Big)\\
&\quad +\mu_1 N_1\Big(\|D v^{(k)}-D v^{(k-1)}\|_{L^\infty(B_{\rho})}+\|\pi^{(k)}-\pi^{(k-1)}\|_{L^\infty(B_{\rho})}\Big)\\
&\quad +\mu_1 N_1(\rho-s)^\delta\Big(\big[D v^{(k)}-D v^{(k-1)}\big]_{C^\delta_{x'}(B_{\rho})}+\big[\pi^{(k)}-\pi^{(k-1)} \big]_{C^\delta_{x'}(B_{\rho})}\Big),
\end{aligned}
$$
where we used \eqref{210411@A2}, \eqref{210415@A2}, and \eqref{210422@C1} in the second inequality.
Note that the constant $N_1$ is independent of $\{(v^{(k)}, \pi^{(k)})\}$, but it may depend on $\varepsilon$.
By choosing $\rho_0$ sufficiently small, which (and also $\rho_1$) may depend on $\varepsilon$, and following a standard iteration argument, we get uniform $L^\infty$ bounds of $Dv^{(k)}$ and $\pi^{(k)}$ in $B_{\rho_1/2}$.
Thus the functions
$$
Dv(y)=Dv^*(y), \quad \pi(y)=\pi^*(y),
$$
and hence $Du_{\varepsilon}(x)$ and $p_{\varepsilon}(x)$ are bounded in a neighborhood of the origin with a radius depending also on $\varepsilon$.
It is easy to check that the same argument as above still works at every point near the origin, for instance, in $B_{r_0/2}$, where $r_0$ is the constant from the beginning of this step, which is independent of $\varepsilon$.
Therefore,
$$
(Du_{\varepsilon}, p_{\varepsilon})\in L^\infty(B_{r_0/2})^{d\times d}\times L^\infty(B_{r_0/2}).
$$
Now we can apply the a priori estimate in {\bf Step 1} to \eqref{210421@B1} to get  uniform $L^\infty$-bounds of
$(Du_{\varepsilon}, p_{\varepsilon})$, and then, take the limit $\varepsilon\to 0^+$ to obtain the boundedness of the limit function $(Du, p)$ in $B_{r_0/2}$.
The theorem is proved.
\end{proof}

We conclude the proof of Theorem \ref{M1}  with the following remark.

\begin{remark}		\label{210219@rmk1}
As mentioned in Remark \ref{210210@rmk1}, the regularity results in Theorem \ref{M1} can be extended to  weak solutions of
$$
\begin{cases}
\mathcal{L}u+\nabla p=D_{\alpha}f_{\alpha}+f&\quad\mbox{in }~\cD,\\
\Div u=g&\quad\mbox{in }~\cD,
\end{cases}
$$
where $f\in L^{s}(\cD)^d$ with $s>d$.
In this case, the upper bounds of the $L^\infty$-norm of $(Du, p)$ and the modulus of continuity of $(D_{x'}u, U)$ can be derived as follows.

Let $x_0\in \cD'$ and $r\in (0, R_1]$ such that $B_r(x_0)\subset \cD'$.
Due to the solvability of  the
divergence equation (see, for instance, \cite[Lemma 3.1]{MR3809039}), there exist $h_\alpha\in W^{1,s}(B_r(x_0))^d$, $\alpha\in \{1,2,\ldots,d\}$, such that
$$
\sum_{\alpha=1}^dD_\alpha h_\alpha=f \quad \text{in }\, B_r(x_0)
$$
and
$$
(h_\alpha)_{B_r(x_0)}=0, \quad \|D h_\alpha\|_{L^s(B_r(x_0))}\lesssim_{d,s}\|f\|_{L^s(B_r(x_0))}.
$$
Then $(u, p)$ satisfies
$$
\begin{cases}
\mathcal{L}u+\nabla p=D_{\alpha}(f_{\alpha}+h_\alpha)&\quad\mbox{in }~B_r(x_0),\\
\Div u=g&\quad\mbox{in }~B_r(x_0),
\end{cases}
$$
where, by both Morrey and Poincar\'e inequalities,
$$
r^{1-d/s}[h_\alpha]_{C^{1-d/s}(B_r(x_0))}+\|h_\alpha\|_{L^\infty(B_r(x_0))}\lesssim r^{1-d/s}\|f\|_{L^s(B_r(x_0))}.
$$
Thus by the same argument as in the proof of Theorem \ref{M1} with a fixed $\gamma\in\big(1-\frac{d}{s},1\big)$, we have
$$
\begin{aligned}
&\|Du\|_{L^\infty(B_{r/2}(x_0))}+\|p\|_{L^\infty(B_{r/2}(x_0))}\\
&\le Nr^{-d} \big(\|Du\|_{L^1(B_r(x_0))}+\|p\|_{L^1(B_r(x_0))}\big)\\
&\quad + N\big(\|f_\alpha\|_{L^\infty(B_r(x_0))}+\|g\|_{L^\infty(B_r(x_0))}\big)+N\cF(r)+Nr^{1-d/s}\|f\|_{L^s(B_r(x_0))},
\end{aligned}
$$
where $N=N(d, \nu, M, \varrho_0, \omega_{A^{\alpha\beta}},s)$.
Moreover,
for $y_0\in B_{r/4}(x_0)$, we obtain that
$$
\begin{aligned}
&|(D_{x'}u(x_0), U(x_0))-(D_{x'}u(y_0), U(y_0))|\\
&\le N r^{-d}\big(\|Du\|_{L^1(B_r(x_0))}+\|p\|_{L^1(B_r(x_0))}\big)\cE(|x_0-y_0|)\\
&\quad +N\big(\|f_\alpha\|_{L^\infty(B_r(x_0))}+\|g\|_{L^\infty(B_r(x_0))}\big)\cE(|x_0-y_0|)\\
&\quad +N(\cF(r)+r^{1-d/s}\|f\|_{L^s(B_r(x_0))})\cE(|x_0-y_0|)+N\cF(|x_0-y_0|)\\
&\quad +N \|f\|_{L^s(B_r(x_0))}|x_0-y_0|^{1-d/s}.
\end{aligned}
$$
\end{remark}

\subsection{Proof of Theorem \ref{M2}}		\label{S_3_2}
Note that $(u, p)$ satisfies
$$
\begin{cases}
\mathcal{L}u+\nabla p=D_{\alpha}f_{\alpha}+f&\quad\mbox{in }~\cD,\\
\Div u=g&\quad\mbox{in }~\cD,
\end{cases}
$$
where $f=-u^\alpha D_\alpha u$.
We consider two cases.

{\em{Case 1}}. $q>d$. In this case, by the Morrey–Sobolev embedding theorem, we see that $f\in L^q_{\rm{loc}}(\cD)^d$.
Thus the theorem follows from Remark \ref{210210@rmk1} applied to a slightly shrunk domain.

{\em{Case 2}}. $q\le d$.
From the first case,  it suffices to improve the regularity of $Du$ from $L^q$ to $L^{s}_{\rm{loc}}$ for some $s>d$.
Let $x_0\in \cD$.
We may assume that $x_0=0$ and $B_1\subset \cD$ after translating and scaling the coordinates.

We first derive an a priori estimate for $(Du, p)$ under the assumption that $(u, p)\in W^{1,q^*}(B_1)^d\times L^{q^*}(B_1)$, where $q^*$ is the Sobolev conjugate of $q$, i.e., $q^*=dq/(d-q)$ when $q<d$ and $q^*\in (q,\infty)$ is arbitrary when $q=d$.
Let $\eta$ be an infinitely differentiable function in $\bR^d$ such that
$$
0\le \eta\le 1, \quad \eta\equiv 1 \, \text{ in }\, B_{1/2}, \quad \operatorname{supp}\eta\subset B_1, \quad |\nabla \eta|\lesssim_d 1.
$$
We define an elliptic operator  $\tilde{\cL}$ by
$$
\tilde{\cL} u=D_\alpha (\tilde{A}^{\alpha\beta}D_\beta u),
$$
where $\tilde{A}^{\alpha\beta}=\eta A^{\alpha\beta}+\nu (1-\eta)\delta_{\alpha\beta}\bf{I}$.
Here, $\nu$ is the constant from \eqref{210210@eq4}, $\delta_{\alpha\beta}$ is the Kronecker delta symbol, and $\bf{I}$ is the $d\times d$ identity matrix.
Note that  $\tilde{A}^{\alpha\beta}$ and $\Omega=B_1$ satisfy \cite[Assumption 2.2 $(\rho)$]{MR3758532} for any $\rho>0$.
Therefore, the $W^{1,q^*}$-estimate in \cite[Theorem 2.4]{MR3758532} is available for $\tilde{\cL}$ on $\Omega=B_1$.

Now, for $r, R$ with $0<r<R\le 1/2$, let $\zeta=\zeta_{r, R}$ be an infinitely differentiable function in $\bR^d$ such that
$$
0\le \zeta \le 1, \quad \zeta \equiv 1 \, \text{ in }\, B_{r}, \quad \operatorname{supp}\zeta \subset B_R, \quad |\nabla \zeta|\lesssim_d (R-r)^{-1}.
$$
Then $(v, \pi)=(\zeta u, \zeta p)\in W^{1,q}_0(B_1)^d\times L^{q}(B_1)$ satisfies
\begin{align}\label{210210@eq5}
\begin{cases}
\tilde{\cL}v+\nabla \pi=F+D_{\alpha}F_{\alpha}&\quad\mbox{in }~B_1,\\
\Div v=G&\quad\mbox{in }~B_1,
\end{cases}
\end{align}
where
$$
F=D_\alpha \zeta A^{\alpha\beta} D_\beta u+\nabla \zeta p-D_\alpha \zeta f_\alpha-\zeta u^{\alpha}D_\alpha u,
$$
$$
F_\alpha=A^{\alpha\beta}  uD_\beta \zeta+\zeta f_\alpha, \quad G=\nabla \zeta \cdot u+\zeta g.
$$
Observe that $F_\alpha\in L^{q^*}(B_1)^d$, $G\in L^{q^*}(B_1)$, and
$$
\begin{aligned}
\|F\|_{L^{q}(B_1)}&\lesssim_{d, \nu} (R-r)^{-1}\big(\|Du\|_{L^{q}(B_R)}+\|p\|_{L^{q}(B_R)}\big)\\
&\quad +R(R-r)^{-1}\|f_\alpha\|_{L^{q^*}(B_R)}+\|u\|_{L^d(B_R)}\|Du\|_{L^{q^*}(B_R)}.
\end{aligned}
$$
Then by the $W^{1,q^*}$-solvability in  \cite[Theorem 2.4]{MR3758532}, \eqref{210210@eq5} also have a unique solution $(\tilde v, \tilde \pi)\in W^{1,q^*}_0(B_1)^d\times \tilde L^{q^*}(B_1)$, which is also in $W^{1,q}_0(B_1)^d\times \tilde L^{q}(B_1)$. By the uniqueness of $W^{1,q}_0(B_1)^d\times L^{q}(B_1)$ solutions, we get $(\tilde v,\tilde \pi)=(v,\pi-(\pi)_{B_1})$.
By applying the $W^{1,q^*}$-estimate in  \cite[Theorem 2.4]{MR3758532} to \eqref{210210@eq5} and using the above inequality, we obtain that
$$
\begin{aligned}
&\|Dv\|_{L^{q^*}(B_1)}+\|\pi-(\pi)_{B_1}\|_{L^{q^*}(B_1)}\\
&\le N\big(\|F\|_{L^{q}(B_1)}+\|F_\alpha\|_{L^{q^*}(B_1)}+\|G\|_{L^{q^*}(B_1)}\big)\\
&\le N_0(R-r)^{-1}\big(\|Du\|_{L^{q}(B_R)}+\|p\|_{L^{q}(B_R)}\big)+N_0(R-r)^{-1}\|u\|_{L^{q^*}(B_R)}\\
&\quad +N_0 R(R-r)^{-1}\|f_\alpha\|_{L^{q^*}(B_R)}+N_0\|g\|_{L^{q^*}(B_R)}+N_0\|u\|_{L^d(B_R)}\|Du\|_{L^{q^*}(B_R)},
\end{aligned}
$$
where $N_0=N_0(d, \nu, M, R_0, \varrho_0, \omega_{A^{\alpha\beta}}, q)$.
From the triangle and H\"older's inequalities, it follows  that
$$
\begin{aligned}
&\|Du\|_{L^{q^*}(B_r)}+\|p\|_{L^{q^*}(B_r)}\\
&\le \|Dv\|_{L^{q^*}(B_1)}+\|\pi-(\pi)_{B_1}\|_{L^{q^*}(B_1)}+N_1\|\pi\|_{L^1(B_1)}\\
&\le \|Dv\|_{L^{q^*}(B_1)}+\|\pi-(\pi)_{B_1}\|_{L^{q^*}(B_1)}+N_1\|p\|_{L^q(B_R)}.
\end{aligned}
$$
Then by taking $R_2\in (0,1/2]$ so that
$$
N_0\|u\|_{L^d(B_{R_2})}\le \varepsilon:=\frac{1}{8},
$$
we have
$$
\begin{aligned}
&\|Du\|_{L^{q^*}(B_r)}+\|p\|_{L^{q^*}(B_r)}\\
&\le (N_0+N_1)(R-r)^{-1}\big(\|Du\|_{L^{q}(B_R)}+\|p\|_{L^{q}(B_R)}\big)+N_0(R-r)^{-1}\|u\|_{L^{q^*}(B_R)}\\
&\quad +N_0 R(R-r)^{-1}\|f_\alpha\|_{L^{q^*}(B_R)}+N_0\|g\|_{L^{q^*}(B_R)}+\varepsilon \|Du\|_{L^{q^*}(B_R)}.
\end{aligned}
$$
Note that the above inequality holds for all $r, R$ with $0<r<R\le R_2$.
Therefore, by the well-known iteration argument, we conclude the following a priori estimate for $(Du, p)$:
\begin{equation}		\label{210212@A1}
\begin{aligned}
&\|Du\|_{L^{q^*}(B_{R/2})}+\|p\|_{L^{q^*}(B_{R/2})}\\
&\lesssim R^{-1}\big(\|Du\|_{L^{q}(B_R)}+\|p\|_{L^{q}(B_R)}\big)\\
&\quad +R^{-1}\|u\|_{L^{q^*}(B_R)}+\|f_\alpha\|_{L^{q^*}(B_R)}+\|g\|_{L^{q^*}(B_R)}
\end{aligned}
\end{equation}
for all $R\in (0, R_2]$.

We are ready to prove
\begin{equation}		\label{210210@D1}
D u\in L^s_{\rm{loc}}(\cD)^{d\times d} \quad \text{for some $s>d$}.
\end{equation}
From \eqref{210212@A1} and  a standard approximation argument, one can show that
$Du\in L^{q^*}_{\rm{loc}}(\cD)^{d\times d}$.
This yields \eqref{210210@D1} when $d/2<q\le d$ because $q^*>d$.
On the other hand, if  $q=d/2$, then since $Du\in L^{q_1}_{\rm{loc}}(\cD)^{d\times d}$ for all $q_1\le d$,
by applying the above regularity result again,  we get \eqref{210210@D1}.
We have thus proved the regularity results in the theorem.
The corresponding upper bounds of the $L^\infty$-norm of $(Du, p)$ and the modulus of continuity of $(D_{x'}u, U)$ can be derived as in Remark \ref{210219@rmk1}.
\qed

\bibliographystyle{plain}

\begin{thebibliography}{10}

\bibitem{MR2663713}
Hammadi Abidi, Guilong Gui, and Ping Zhang.
\newblock On the decay and stability of global solutions to the 3{D}
  inhomogeneous {N}avier-{S}tokes equations.
\newblock {\em Comm. Pure Appl. Math.}, 64(6):832--881, 2011.

\bibitem{AKKY20} Habib Ammari, Hyeonbae Kang, Do Wan Kim, and Sanghyeon Yu. \newblock Quantitative estimates for stress concentration of the Stokes
flow between adjacent circular cylinders. 2020.  arXiv: 2003.06578v2 [math.AP].


\bibitem{MR2548032}
Alano Ancona.
\newblock Elliptic operators, conormal derivatives and positive parts of
  functions (with an appendix by {H}a\"{\i}m {B}rezis).
\newblock {\em J. Funct. Anal.}, 257(7):2124--2158, 2009.


\bibitem{MR2465684}
Ha\"{\i}m Brezis.
\newblock On a conjecture of {J}. {S}errin.
\newblock {\em Atti Accad. Naz. Lincei Rend. Lincei Mat. Appl.},
  19(4):335--338, 2008.


\bibitem{MR0853976}
Michel Chipot, David Kinderlehrer, and Giorgio Vergara-Caffarelli.
\newblock Smoothness of linear laminates.
\newblock {\em Arch. Rational Mech. Anal.}, 96(1):81--96, 1986.

\bibitem{MR3912724}
Jongkeun Choi and Hongjie Dong.
\newblock Gradient estimates for {S}tokes systems with {D}ini mean oscillation
  coefficients.
\newblock {\em J. Differential Equations}, 266(8):4451--4509, 2019.

\bibitem{MR3809039}
Jongkeun Choi, Hongjie Dong, and Doyoon Kim.
\newblock Conormal derivative problems for stationary {S}tokes system in
  {S}obolev spaces.
\newblock {\em Discrete Contin. Dyn. Syst.}, 38(5):2349--2374, 2018.

\bibitem{MR2927619}
Hongjie Dong.
\newblock Gradient estimates for parabolic and elliptic systems from linear
  laminates.
\newblock {\em Arch. Ration. Mech. Anal.}, 205(1):119--149, 2012.

\bibitem{MR3758532}
Hongjie Dong and Doyoon Kim.
\newblock Weighted {$L_q$}-estimates for stationary {S}tokes system with
  partially {BMO} coefficients.
\newblock {\em J. Differential Equations}, 264(7):4603--4649, 2018.


\bibitem{MR3620893}
Hongjie Dong and Seick Kim.
\newblock On {$C^1$}, {$C^2$}, and weak type-{$(1,1)$} estimates for linear
  elliptic operators.
\newblock {\em Comm. Partial Differential Equations}, 42(3):417--435, 2017.

\bibitem{MR3902466}
Hongjie Dong and Haigang Li.
\newblock Optimal estimates for the conductivity problem by {G}reen's function
  method.
\newblock {\em Arch. Ration. Mech. Anal.}, 231(3):1427--1453, 2019.

\bibitem{MR3403998}
Hongjie Dong and Jingang Xiong.
\newblock Boundary gradient estimates for parabolic and elliptic systems from
  linear laminates.
\newblock {\em Int. Math. Res. Not. IMRN}, (17):7734--7756, 2015.

\bibitem{dx2021}
Hongjie Dong and Longjuan Xu.
\newblock Gradient estimates for divergence form parabolic systems from
  composite materials. \newblock {\em Calc. Var. Partial Differential Equations}, 60(3), 98, 2021.
\bibitem{MR3961984}
Hongjie Dong and Longjuan Xu.
\newblock Gradient estimates for divergence form elliptic systems arising from
  composite material.
\newblock {\em SIAM J. Math. Anal.}, 51(3):2444--2478, 2019.


\bibitem{MR0520820}
Claus Gerhardt.
\newblock Stationary solutions to the {N}avier-{S}tokes equations in dimension
  four.
\newblock {\em Math. Z.}, 165(2):193--197, 1979.

\bibitem{MR0717034}
Mariano Giaquinta.
\newblock {\em Multiple integrals in the calculus of variations and nonlinear
  elliptic systems}, volume 105 of {\em Annals of Mathematics Studies}.
\newblock Princeton University Press, Princeton, NJ, 1983.

\bibitem{MR3078336}
Bharat~Raj Jaiswal and Bali~Ram Gupta.
\newblock {S}tokes flow over composite sphere: {L}iquid core with permeable
  shell.
\newblock {\em Journal of Applied Fluid Mechanics}, 8(3):339--350, 2015.


\bibitem{MR0425391}
{O}l'ga~Aleksandrovna Lady{\v{z}}enskaja and {V}sevolod~{A}lekseevich
  Solonnikov.
\newblock The unique solvability of an initial-boundary value problem for
  viscous incompressible inhomogeneous fluids.
\newblock {\em Zap. Nau\v cn. Sem. Leningrad. Otdel. Mat. Inst. Steklov.
  (LOMI)}, 52:52--109, 218--219, 1975.


\bibitem{MR1770682}
Yanyan Li and Michael Vogelius.
\newblock Gradient estimates for solutions to divergence form elliptic
  equations with discontinuous coefficients.
\newblock {\em Arch. Ration. Mech. Anal.}, 153(2):91--151, 2000.

\bibitem{MR1990481}
Yanyan Li and Louis Nirenberg.
\newblock Estimates for elliptic systems from composite material.
\newblock {\em Comm. Pure Appl. Math.}, 56(7):892--925, 2003.
\newblock Dedicated to the memory of J\"{u}rgen K. Moser.

\bibitem{MR1184023}
Gary~M. Lieberman.
\newblock Intermediate {S}chauder theory for second order parabolic equations.
  {IV}. {T}ime irregularity and regularity.
\newblock {\em Differential Integral Equations}, 5(6):1219--1236, 1992.


\bibitem{MR1422251}
Pierre-Louis Lions.
\newblock {\em Mathematical topics in fluid mechanics. {V}ol. 1}, volume~3 of
  {\em Oxford Lecture Series in Mathematics and its Applications}.
\newblock The Clarendon Press, Oxford University Press, New York, 1996.

\bibitem{MN1987}
Jacom~H. Masliyah, Graham Neale, K.~Malysa, and Theodorus~G.M. Van De~Ven.
\newblock {C}reeping flow over a composite sphere: {S}olid core with porous
  shell.
\newblock {\em Chemical Engineering Science}, 42(2):245--253, 1987.


\end{thebibliography}

\end{document}